\definecolor{darkblue}{RGB}{0,0,160}
\definecolor{verylight}{gray}{0.97}
\definecolor{light}{gray}{0.9}
\definecolor{medium}{gray}{0.85}
\definecolor{dark}{gray}{0.6}
 \def\NZQ{\mathbb}               % the font for N,Z,Q,R,C
 \def\ZZ{{\NZQ Z}}
 \def\frk{\mathfrak}               % font for "Fraktur"
 \def\mm{{\frk m}}
 \def\G{{\mathcal G}}
  \def\Ac{{\mathcal A}}
 \def\opn#1#2{\def#1{\operatorname{#2}}} % to make operators
 \opn\chara{char} \opn\length{\ell} \opn\pd{pd} \opn\rk{rk}
 \opn\projdim{proj\,dim} \opn\injdim{inj\,dim} \opn\rank{rank}
 \opn\depth{depth} \opn\grade{grade} \opn\height{height}
 \opn\embdim{emb\,dim} \opn\codim{codim}
 \opn\Tr{Tr} \opn\bigrank{big\,rank}
 \opn\superheight{superheight}\opn\lcm{lcm}
 \opn\trdeg{tr\,deg}%\emph{
 \opn\reg{reg} \opn\lreg{lreg} \opn\ini{in} \opn\lpd{lpd}
 \opn\size{size} \opn\sdepth{sdepth}
 \opn\link{link}\opn\fdepth{fdepth}\opn\lex{lex}
 \opn\tr{tr}
 \opn\type{type}
 \opn\gap{gap}
 \opn\diam{diam}
 \opn\Mod{Mod}
 \opn\div{div} \opn\Div{Div} \opn\cl{cl} \opn\Cl{Cl}
 \opn\Spec{Spec} \opn\Supp{Supp} \opn\supp{supp} \opn\Sing{Sing}
 \opn\Ass{Ass} \opn\Min{Min}\opn\Mon{Mon}
 \opn\Ann{Ann} \opn\Rad{Rad} \opn\Soc{Soc}
 \opn\Im{Im} \opn\Ker{Ker} \opn\Coker{Coker} \opn\Am{Am}
 \opn\Hom{Hom} \opn\Tor{Tor} \opn\Ext{Ext} \opn\End{End}
 \opn\Aut{Aut} \opn\id{id}
 \opn\nat{nat}
 \opn\pff{pf}%   \pf exists already
 \opn\Pf{Pf} \opn\GL{GL} \opn\SL{SL} \opn\mod{mod} \opn\ord{ord}
 \opn\Gin{Gin} \opn\Hilb{Hilb}\opn\sort{sort}
 \opn\PF{PF}\opn\Ap{Ap}
 \opn\dist{dist}
 \opn\aff{aff}
 \opn\relint{relint} \opn\st{st}
 \opn\lk{lk} \opn\cn{cn} \opn\core{core} \opn\vol{vol}  \opn\inp{inp} \opn\nilpot{nilpot}
 \opn\link{link} \opn\star{star}\opn\lex{lex}\opn\set{set}
 \opn\width{wd}
 \opn\Fr{F}
 \opn\QF{QF}
 \opn\G{G}
 \opn\type{type}\opn\res{res}
 \opn\conv{conv}
 \opn\gr{gr}
 \def\pot#1#2{#1[\kern-0.28ex[#2]\kern-0.28ex]}
 \opn\dirlim{\underrightarrow{\lim}}
 \opn\inivlim{\underleftarrow{\lim}}
 \let\union=\cup
 \let\sect=\cap
 \let\dirsum=\oplus
 \let\tensor=\otimes
 \let\iso=\cong
 \let\Sect=\bigcap
 \let\Dirsum=\bigoplus
 \let\to=\rightarrow
 \def\Implies{\ifmmode\Longrightarrow \else
         \unskip${}\Longrightarrow{}$\ignorespaces\fi}
 \def\implies{\ifmmode\Rightarrow \else
         \unskip${}\Rightarrow{}$\ignorespaces\fi}
 \def\iff{\ifmmode\Longleftrightarrow \else
         \unskip${}\Longleftrightarrow{}$\ignorespaces\fi}
 \newtheorem{Theorem}{Theorem}[section]
 \newtheorem{Lemma}[Theorem]{Lemma}
 \newtheorem{Corollary}[Theorem]{Corollary}
 \newtheorem{Proposition}[Theorem]{Proposition}
 \newtheorem{Examples}[Theorem]{Examples}
 \let\epsilon\varepsilon
 \let\kappa=\varkappa
 \opn\dis{dis}
 \def\pnt{{\raise0.5mm\hbox{\large\bf.}}}
 \opn\Lex{Lex}
\begin{document}
%\linenumbers
\title {Classes of cut ideals and their Betti numbers}

\author {J\"urgen Herzog}
\address{Fachbereich Mathematik, Universit\"at Duisburg-Essen, Campus Essen, 45117
Essen, Germany} \email{juergen.herzog@uni-essen.de}

\author {Masoomeh Rahimbeigi}
\address{Ilam, Haft cheshmeh, Askarinia street, 69391-13111, Iran}
\email{rahimbeigi-masoome@yahoo.com}

\author {Tim R\"omer}
\address{Universit\"at Osnabr\"uck, Institut f\"ur Mathematik, 49069 Osnabr\"uck, Germany}
\email{troemer@uos.de}

\begin{abstract}
We study monomial cut ideals associated to a graph $G$, which are a monomial analogue of toric cut ideals as introduced by Sturmfels and Sullivant. Primary decompositions, projective dimensions, and Castelnuovo-Mumford regularities are investigated if the graph can be decomposed as $0$-clique sums and disjoint union of subgraphs. The total Betti numbers
of a cycle are computed. Moreover, we classify all Freiman ideals among  monomial cut ideals.
\end{abstract}

%\thanks{}

\subjclass[2020]{Primary 05E40, 13C99; Secondary 13F20, 13H10.}
%       05E40 Combinatorial aspects of commutative algebra
%       13C99 Theory of modules and ideals ...
%		13F20 Polynomial rings and ideals; rings of
%		13H10 Special types (Cohen-Macaulay, Gorenstein, integer-valued polynomials

\keywords{cut sets, monomial ideals, number of generators, Betti numbers, Cohen-Macaulay type, powers of ideals}

\maketitle

\setcounter{tocdepth}{1}
%\tableofcontents

\section*{Introduction}
In combinatorial optimization the \textsc{MaxCut} problem is a well-studied problem
appearing in particular as one of the 21 NP-complete problems of Karp \cite{Karp}. See \cite{Deza-Laurent-I, Deza-Laurent-II} and \cite{Deza-Laurent-10} for overviews and more details.
Sturmfels and Sullivant \cite{STS} started an interesting connection to
algebraic geometry and commutative algebra by introducing \emph{toric cut ideals} and -\emph{algebras}, which have been intensively studied in the last decade.
See, e.g., \cite{E, NP, MR1, MR2, Ohsugi, Ohsugi-Gorenstein, Sakamoto}.

Here we follow a different point of view by studying \emph{monomial cut ideals} $I(G)$ of a graph $G$ (see Section \ref{Section-prelim} for details). They have been introduced in \cite{O} and are monomial ideals generated by squarefree monomials associated to cut vectors.
The main goal of this paper is to study their algebraic properties.

After discussing some preliminaries in Section \ref{Section-prelim}, we determine in Lemma~\ref{Lemma-generators}  the minimal monomial generators of $I(G)$. We denote by $V(G)$ the vertex set of a graph $G$. Our next goal is to study graph which are of form
\begin{equation}
\label{Eq:good}
G=G_1\union \cdots\union G_r \text{ such that }
|(V(G_1)\union\cdots \union V(G_{i-1}))\sect V(G_i)|\leq 1
\text{ for } 2\leq i\leq r,
\end{equation}
with subgraphs $G_1,\ldots, G_r$.
Then  the minimal primary decomposition of
$I(G)$ is described in terms of the corresponding decompositions of $I(G_i)$. In Section~\ref{Section-resolutions} we recall  a general result from \cite{H-Habil} regarding the resolution of $IJ$ satisfying $IJ=I\sect J$, where $I$ and $J$ are graded ideals in a standard graded polynomial ring over a field $K$, see Proposition~\ref{D}. This result is used Section~\ref{Section-hom-properties} to compute the projective dimension and the Castelnuovo-Mumford regularity of $I(G)$  in terms of the data of $I(G_i)$. Consequences related to the Cohen--Macaulay property and having linear resolutions are the content of Proposition \ref{Prop-CM-linres}. The total Betti numbers of $I(C_n)$ for an $n$-cycle $C_n$ are determined in Section \ref{Section-total-Betti-numbers}, see Corollary \ref{bettinoteasy} and Theorem \ref{final}. Note that the case of trees has already been considered in \cite{O}. Recently, monomial Freiman ideals have been introduced and investigated in \cite{HHZ}. In Section \ref{Section-Freiman} we classify completely all Freiman monomial cut ideals in Theorem \ref{classification}.

\section{Monomial cut ideals of graphs}
\label{Section-prelim}

Let $G$ be a graph with vertex set $V(G)=\{v_1,\dots,v_n\}$ of cardinality $n$ and edge set $E(G)$. We always assume that the graph is finite, simple, and has at least one edge, i.e.~$|E(G)|\geq 1$ and $|V(G)|\geq 2$. Let $E(G)=\{e_1,\ldots,e_m\}$ and  let $S=K[s_1,\ldots, s_n, t_1, \ldots,t_n]$ be the standard graded polynomial ring over a field $K$ with $2n$ many variables $s_k$ and $t_k$. To any $A\subseteq [n]=\{1,\dots,n\}$ we associate the monomial
\[
u_A= \alpha_1\cdots\alpha_m\in S,
\]
where $\alpha_i=s_i$, if one vertex of the edge $e_i$ belongs to $A$ and the other vertex of $e_i$ belongs to $A^c=V(G)\setminus A$, and where $\alpha_i=t_i$, if both vertices belong to $A$ or both vertices belong to $A^c$.  We call $u_A$ the {\em cut monomial} of $A$ with respect to $G$.
It is also convenient to write $\chi^A\in \ZZ^E$ for the $0$-$1$-vector with $\chi^A_e=1$ if $\alpha_i=s_i$ and $\chi^A_e=0$ if $\alpha_i=t_i$. We call $\chi^A$ a \emph{cut vector} with respect to $A$.

It follows from the definition of $u_A$ that for any $A\subseteq [n]$ we have $\deg u_A=m$, where $m=|E(G)|$. The ideal
\[
I(G)=\langle u_A\:\; A\subseteq [n]\rangle \subseteq S
\]
was introduced in \cite{O} (using a different polynomial ring) and is called the {\em monomial cut ideal} of $G$.
It is the object of interest in this paper and the goal is to understand its algebraic properties in terms of given combinatorial data related to $G$.

Observe that by definition $u_A=u_{A^c}$. Since there are exactly $2^n$ subsets of $[n]$, it follows that $I(G)$ has at most $2^{n-1}$ monomial generators. The number of minimal generators is determined in the next result, which seems to be folklore (in the set theoretic version). For the convenience of the reader we discuss a proof of this fact.
\begin{Lemma}
\label{Lemma-generators}
Let $G$ be a graph with $c\geq 1$ many connected components. Then $I(G)$ has precisely $2^{n-c}$ minimal monomial generators.
\end{Lemma}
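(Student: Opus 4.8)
The plan is to reduce counting the minimal generators of $I(G)$ to a rank computation over $\FF_2$. First, since every cut monomial $u_A$ has degree $m=|E(G)|$, the relation $u_A\mid u_B$ forces $u_A=u_B$; hence the minimal monomial generators of $I(G)$ are exactly the pairwise distinct elements among the $u_A$, and the number to be computed is $|\{u_A\colon A\subseteq[n]\}|$. Moreover $u_A=u_B$ if and only if $\chi^A=\chi^B$, since the $i$-th entry of the cut vector records precisely whether $s_i$ or $t_i$ occurs in $u_A$. So it suffices to count the distinct cut vectors $\chi^A$ as $A$ ranges over all subsets of $[n]$.

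Next I would identify each subset $A\subseteq[n]=V(G)$ with its indicator vector in $\FF_2^n$. For an edge $e_i=\{v_j,v_k\}$ the $i$-th coordinate of $\chi^A$ is $1$ exactly when $|\{v_j,v_k\}\cap A|=1$, that is, it equals $(\mathbf 1_A)_j+(\mathbf 1_A)_k$ in $\FF_2$. Thus $A\mapsto\chi^A$ is an $\FF_2$-linear map $\varphi\colon\FF_2^n\to\FF_2^{E(G)}$ (indeed it is given by the transpose of the incidence matrix of $G$), and consequently
\[
|\{\chi^A\colon A\subseteq[n]\}|=|\Im\varphi|=2^{\dim\Im\varphi}=2^{\,n-\dim\Ker\varphi}.
\]
It then remains to show $\dim\Ker\varphi=c$. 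Now $\chi^A=0$ means that no edge of $G$ joins $A$ to $A^c$; this holds whenever $A$ is a union of connected components of $G$, and conversely, if some component $H$ meets both $A$ and $A^c$, a path in $H$ from a vertex of $A$ to a vertex of $A^c$ must contain a crossing edge. Hence $\Ker\varphi$ consists exactly of the $2^c$ subsets that are unions of components, so $\dim\Ker\varphi=c$, and combining the steps gives that $I(G)$ has precisely $2^{n-c}$ minimal monomial generators.

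No step here is genuinely hard. The points that need some care are the elementary graph-theoretic claim that the subsets with empty edge-boundary are exactly the unions of components, and, pedantically, the bookkeeping in the first step that distinct cut vectors give distinct monomials. The real content is the classical observation that the set of edge cuts of a graph is the image of the incidence map over $\FF_2$, and therefore has cardinality $2^{n-c}$, the dimension being that of the cut (bond) space of $G$; this linear-algebraic statement subsumes the folklore set-theoretic version referred to above.
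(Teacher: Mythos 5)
Your argument is correct, and it takes a genuinely different route from the paper. The paper proceeds by induction on the number of components: for connected $G$ it shows directly that $u_A=u_B$ forces $B\in\{A,A^c\}$ by propagating membership along paths from a fixed vertex, and for $c>1$ it splits each cut vector uniquely into cut vectors of one component and of the rest. You instead observe that $A\mapsto\chi^A$ is the $\FF_2$-linear map given by the (edge--vertex) incidence matrix, so the count is $2^{\,n-\dim\Ker\varphi}$ by rank--nullity, and the kernel is exactly the $c$-dimensional space of unions of components. The two proofs are morally the same fact in different packaging --- the paper's connected case is precisely the statement that the kernel is $\{\emptyset,[n]\}$ --- but your version handles connected and disconnected graphs uniformly, avoids the induction entirely, and identifies the generating set with the cut (bond) space of $G$, which is a cleaner conceptual explanation of where the number $2^{n-c}$ comes from. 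The paper's argument, in exchange, is completely elementary and self-contained, using nothing beyond connectivity. The one bookkeeping step you handle correctly and that must not be skipped is that all $u_A$ have the same degree $m$, so distinct cut monomials really are distinct \emph{minimal} generators; with that in place your proof is complete.
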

\begin{proof}
We prove the lemma by induction on $c\geq 1$. At first assume that $c=1$, i.e.~ the graph $G$ is connected. Since all generators of $I(G)$ are squarefree monomials of degree $m$, it suffices to show
that there exists $2^{n-1}$ many of them. By definition and as said already above, there are at most $2^{n-1}$ monomial generators.

Assume that $u_A=u_B$ or equivalently $\chi^A=\chi^B$ for two subsets $A,B\subseteq [n]$. We need to show that $A=B$. By replacing $A$ by $A^c$
or $B$ by $B^c$ if necessary, we may assume that $n\in A\cap B$. Let $k\in [n-1]$. Since $G$ is connected,  there exists a path $P\: v_n = v_{i_1},v_{i_2}, v_{i_r}=v_k$ in $G$ between $v_n$ and $v_k$. By induction on $j$ we show that $i_j\in A$ if and only if $i_j\in B$. Then it follows that  $k\in A$ if and only if $k\in B$,  which implies that $A=B$. For $j=1$, we have $n\in A$ and $n\in B$, by assumption. Now let $j>1$. By our induction hypothesis,  $i_{j-1}\in A$ if and only if $i_{j-1}\in B$. Since $v_{i_j}$ is connected by an edge with $v_{i_{j-1}}$, the cut vector  $\chi_A$ tells us whether $i_j\in A$ or $i_j\not\in A$, Thus, since $\chi_A=\chi_B$ it follows that $i_j\in A$ if and only $i_j\in B$.

Next assume $c>1$. Let $C$ be one component of $G$ with $n_C$ many vertices and let $H$ be the induced subgraph of $G$ of the remaining $c-1$ components with $n_H=n-n_C$ many vertices. By induction $C$ has $2^{n_C-1}$  and $H$ has $2^{n_H-c+1}$ many different cut vectors. Every cut vector of $G$ can be decomposed in a unique way as one cut vector from $C$ and one from $H$,
which implies that there are
$
2^{n_C-1}\cdot 2^{n_H-c+1}=2^{n_C+n_H-c+1+1}=2^{n-c}
$
many of them. These correspond one-to-one to the monomial generators of $I(G)$ and this concludes the proof.
\end{proof}

\section{Primary Decompositions related to unions of graphs}

Minimal primary decompositions of monomial cut ideals have been studied in \cite[Theorem 4.2]{O}.
The goal of this section is to understand the
relationship of such decompositions
if the considered graph $G$ can be written as a union of subgraphs, at least in special cases of interest.

We recall the following notion. Let $G_1$ and $G_2$ be two graphs such that $H=(G_1)_{V(G_1)\cap V(G_2)}=(G_2)_{V(G_1)\cap V(G_2)}$. The new graph $G_1\#_H G_2$ with vertex set $V(G_1)\cup V(G_2)$ and edge set $E(G_1)\cup E(G_2)$ is called an \emph{$H$-sum} of $G_1$ and $G_2$. Note that
to be well-defined for the notation $G_1\#_H G_2$ we have to fix a labeling of the vertices of $G_1,G_2$, and $H$. Usually this issue is not mentioned explicitly, if everything is clear from the context.
If $H=K_{n+1}$, then $G_1\#_{K_{n+1}}G_2$ is also called an \emph{$n$-clique-sum} or simply an \emph{$n$-sum} of graphs. We also write $G_1\#_{K_{0}}G_2=G_1\sqcup G_2$ for the \emph{disjoint union} of two graphs $G_1$ and $G_2$.

The next fact is useful in the subsequent discussion:

\begin{Lemma}
\label{clear}
Let $G=G_1\#_{K_{n}}G_2$ for $n \in \{-1,0\}$. Furthermore, let $A\subseteq V(G)$ and set $A_1=A\sect V(G_1)$ as well as $A_2=A\sect V(G_2)$. Then
\[
u_A=u_{A_1}u_{A_2},
\]
where  for $i=1,2$, the monomial $u_{A_i}$ is the cut monomial of $A_i$ for $G_i$.
\end{Lemma}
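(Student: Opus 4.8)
The plan is to compare the contributions of individual edges to the three cut monomials, handling both values $n\in\{-1,0\}$ at once. In either case the graph one glues along has no edges — it is the empty graph (giving the disjoint union) or a single vertex — so $E(G)=E(G_1)\union E(G_2)$ is a \emph{disjoint} union, and every edge of $G$ lies in exactly one of $G_1,G_2$. Since $E(G_i)\subseteq E(G)$, each cut monomial $u_{A_i}$ may be regarded as an element of $S$, so the asserted identity makes sense there. Recall that, writing $u_A=\prod_{e\in E(G)}\alpha_e$ with $\alpha_e\in\{s_e,t_e\}$ the factor contributed by the edge $e$, the factor $\alpha_e$ is the $s$-variable of $e$ precisely when the two endpoints of $e$ lie on opposite sides of the bipartition $\{A,\,V(G)\setminus A\}$, and the $t$-variable otherwise; in particular $\alpha_e$ depends only on how that bipartition separates the endpoints of $e$.

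First I would fix an edge $e\in E(G)$ and choose $i\in\{1,2\}$ with $e\in E(G_i)$. Both endpoints of $e$ then lie in $V(G_i)$, and for every vertex $v\in V(G_i)$ one has $v\in A$ if and only if $v\in A\sect V(G_i)=A_i$, hence also $v\in V(G)\setminus A$ if and only if $v\in V(G_i)\setminus A_i$. Therefore the bipartition $\{A,\,V(G)\setminus A\}$ separates the two endpoints of $e$ in exactly the same way as the bipartition $\{A_i,\,V(G_i)\setminus A_i\}$, and consequently the factor contributed by $e$ to $u_A$ (formed with respect to $G$) equals the factor contributed by $e$ to $u_{A_i}$ (formed with respect to $G_i$).

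Finally, I would split the product defining $u_A$ according to which of $G_1,G_2$ contains the edge, and apply the previous step factor by factor:
\[
u_A=\prod_{e\in E(G)}\alpha_e=\Bigl(\prod_{e\in E(G_1)}\alpha_e\Bigr)\Bigl(\prod_{e\in E(G_2)}\alpha_e\Bigr)=u_{A_1}u_{A_2},
\]
which is the claim. The proof is essentially bookkeeping; the only (very mild) obstacle is notational, namely to check that an edge of $G_i$ together with its two endpoints carries the same labels whether regarded inside $G_i$ or inside $G$ — and this is precisely the labeling convention that has already been fixed for the notation $G_1\#_{K_n}G_2$.
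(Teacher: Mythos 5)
Your proof is correct and follows the same route as the paper's: the key observation in both is that for $n\in\{-1,0\}$ the glued-along subgraph has no edges, so $E(G)$ is the disjoint union of $E(G_1)$ and $E(G_2)$, and the factor contributed by each edge depends only on how $A$ (equivalently $A_i$) separates its endpoints. The paper simply states this as "immediate from the definition," while you have written out the edge-by-edge bookkeeping explicitly.
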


\begin{proof}
The result follows immediately from the definition of cut monomials, if we observe that $G_1$ and $G_2$ have no common edges.
\end{proof}

\begin{Proposition}
\label{near}
Let $G=G_1\#_{K_{n}}G_2$ for $n \in \{-1,0\}$.
Then
\[
I(G)=I(G_1)I(G_2).
\]
\end{Proposition}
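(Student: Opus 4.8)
The plan is to show the two ideals have the same generating set, using Lemma \ref{clear} as the main tool. First I would establish the inclusion $I(G)\subseteq I(G_1)I(G_2)$: every generator of $I(G)$ is of the form $u_A$ for some $A\subseteq V(G)$, and by Lemma \ref{clear} we have $u_A=u_{A_1}u_{A_2}$ where $A_i=A\sect V(G_i)$; since $u_{A_i}\in I(G_i)$ for $i=1,2$, the product $u_{A_1}u_{A_2}$ lies in $I(G_1)I(G_2)$. Conversely, for $I(G_1)I(G_2)\subseteq I(G)$, a generator of the product ideal has the form $u_{B_1}u_{B_2}$ with $B_i\subseteq V(G_i)$; I would produce a single subset $B\subseteq V(G)$ with $u_B=u_{B_1}u_{B_2}$ by glueing $B_1$ and $B_2$ appropriately along the shared vertices, and then invoke Lemma \ref{clear} again.

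The glueing in the reverse inclusion is the only point that needs a little care, and it is where the hypothesis $n\in\{-1,0\}$ — i.e. that the graphs share at most one vertex — is used. In the disjoint case ($n=0$ with no shared vertex, or $n=-1$) one simply takes $B=B_1\union B_2$, and then $B\sect V(G_i)=B_i$ so Lemma \ref{clear} gives $u_B=u_{B_1}u_{B_2}$ directly. When $G_1$ and $G_2$ share a single vertex $v$, one must reconcile whether $v\in B_1$ and whether $v\in B_2$: recall from the discussion before Lemma \ref{Lemma-generators} that $u_{B_i}=u_{B_i^c}$, so replacing $B_2$ by its complement in $V(G_2)$ if necessary, we may assume $v\in B_1$ if and only if $v\in B_2$; then $B=B_1\union B_2$ satisfies $B\sect V(G_1)=B_1$ and $B\sect V(G_2)=B_2$, and Lemma \ref{clear} applies. (The key structural reason the argument works is that $G_1$ and $G_2$ have no common edges, so no constraint from an edge of $G_1$ interacts with an edge of $G_2$.)

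Putting both inclusions together yields $I(G)=I(G_1)I(G_2)$. I expect the main (minor) obstacle to be formulating the complement-swap cleanly so that it covers the one-shared-vertex case uniformly with the disjoint case; everything else is an immediate consequence of Lemma \ref{clear} and the definition of the product of two monomial ideals.
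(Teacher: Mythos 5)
Your proposal is correct and follows essentially the same route as the paper: both inclusions via Lemma \ref{clear}, gluing $B_1$ and $B_2$ for the reverse containment. In fact you are slightly more careful than the paper's own proof, which simply asserts that $A=A_1\union A_2$ satisfies $A\sect V(G_i)=A_i$ — a claim that can fail when the shared vertex lies in one $A_i$ but not the other — whereas your complement-swap using $u_{B}=u_{B^c}$ fixes exactly this point.
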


\begin{proof}
Let $A\subseteq V(G)$. Set  $A_1=A\sect V(G_1)$ and $A_2=A\sect V(G_2)$. Then by Lemma~\ref{clear} we have
$u_A=u_{A_1}u_{A_2}$. This shows that $I(G)\subseteq I(G_1)I(G_2)$. Conversely, let $u_{A_1}\in I(G_1)$ and $u_{A_2}\in I(G_2)$. Define $A=A_1\union A_2$. Then $A_i=A\sect V(G_i)$ for $i=1,2$. Thus, Lemma~\ref{clear} implies $u_{A_1}u_{A_2}=u_A$. Hence, $I(G_1)I(G_2)\subseteq I(G)$ and this concludes the proof.
\end{proof}

\begin{Corollary}
\label{many}
Let $G_1,\ldots,G_r$ be (non-trivial) finite simple graphs such  that
\[
|(V(G_1)\union\cdots \union V(G_{i-1}))\sect V(G_i)|\leq 1
\text{ for all } i=2,\ldots,r.
\]
Consider $G=G_1\union \cdots\union G_r$. Then
\[
I(G)=I(G_1)\cdots I(G_r).
\]
In particular, if $G_1,\ldots,G_r$ are the connected components of a graph $G$, then we have $I(G)=I(G_1)\cdots I(G_r)$.
\end{Corollary}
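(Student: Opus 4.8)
The plan is to derive Corollary~\ref{many} from Proposition~\ref{near} by a straightforward induction on $r$. First I would handle the base case $r=1$, where the statement is trivial, and $r=2$, which is literally Proposition~\ref{near} once we observe that the hypothesis $|V(G_1)\sect V(G_2)|\leq 1$ means $G=G_1\#_{K_n}G_2$ for $n\in\{-1,0\}$: indeed, if the intersection is empty then $n=-1$ (disjoint union) and if it is a single vertex then $n=0$, and in either case $G_1$ and $G_2$ share no edge, so the $H$-sum is well-defined with $H=K_{-1}$ or $H=K_0$.

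For the inductive step, set $G'=G_1\union\cdots\union G_{r-1}$. The key combinatorial observation is that $V(G')=V(G_1)\union\cdots\union V(G_{r-1})$, so the hypothesis for index $r$ reads $|V(G')\sect V(G_r)|\leq 1$; hence $G=G'\#_{K_n}G_r$ for some $n\in\{-1,0\}$, and Proposition~\ref{near} gives $I(G)=I(G')I(G_r)$. Moreover the graphs $G_1,\ldots,G_{r-1}$ still satisfy the chain condition of the corollary, so the induction hypothesis yields $I(G')=I(G_1)\cdots I(G_{r-1})$. Combining the two displayed equalities finishes the step. For the final sentence, if $G_1,\ldots,G_r$ are the connected components of $G$ then they are pairwise vertex-disjoint, so in particular the chain condition holds (every intersection in question is empty), and $G=G_1\union\cdots\union G_r$, so the factorization applies directly.

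The only point that needs a little care—and the closest thing to an obstacle—is verifying that the partial union $G'$ is again of the correct form so that Proposition~\ref{near} applies: one must check that $G'$ and $G_r$ genuinely have no common edge, which follows because an edge of $G_r$ has both endpoints in $V(G_r)$, while the overlap $V(G')\sect V(G_r)$ has at most one vertex, so no edge of $G_r$ can lie in $G'$ (and symmetrically). This is the content already packaged in Lemma~\ref{clear} and Proposition~\ref{near}, so in practice the argument is a clean two-line induction; I would present it as such.
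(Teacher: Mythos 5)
Your proof is correct and follows exactly the paper's route: the paper's entire proof of Corollary~\ref{many} is ``induction on $r$ using Proposition~\ref{near}.'' The extra care you take in checking that the partial union $G'$ meets $G_r$ in at most one vertex (and hence shares no edge with it) is a worthwhile detail the paper leaves implicit, but it does not change the argument.
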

\begin{proof}
The assertion follows from Proposition~\ref{near} by induction on $r$.
\end{proof}

Note that in the situation of Corollary \ref{many}
for all $i\neq j$ the ideal $I(G_i)$ and $I(G_j)$ are monomials in disjoint sets of variables, because the  edge sets of $G_i$ and $G_j$ are disjoint for $i\neq j$. This implies that
\[
I(G) =I(G_1)\sect\cdots\sect I(G_r).
\]
In fact, in general if $I$  and $J$ are monomial ideals, then
\[
I\sect J =\langle \lcm(u,v)\:  u\in  G(I), v\in G(J)\rangle,
\]
where $G(M)$ denotes the unique set on monomial generators of a monomial ideal $M$. Hence, if $I$ are $J$ are monomial ideals in different sets of variables, then $I\sect J=IJ$.

As a consequence of  Corollary \ref{many} we have:

\begin{Corollary}
\label{manyprimary}
With the assumptions of Corollary \ref{many}
let $I(G_i)=\Sect_{j=1}^{n_i}P_{ij}$ be the minimal primary decomposition of $I(G_i)$ for $i=1,\dots,r$. Then
\[
I(G)= \Sect_{i=1}^r\Sect_{j=1}^{n_i}P_{ij}
\]
is the minimal primary decomposition of $I(G)$.
In particular,
\[
\dim S/I(G)
=2|V(G)|-\min\{ \height I(G_i):i=1,\dots,r \}.
\]
\end{Corollary}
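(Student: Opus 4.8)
The plan is to reduce everything to Corollary~\ref{many} together with the elementary behaviour of primary decompositions for ideals living in disjoint sets of variables; throughout we take the given decompositions of the $I(G_i)$ to have monomial primary components, as is customary for monomial ideals. Write $S_i\subseteq S$ for the polynomial subring on the variables attached to the edges of $G_i$. The hypothesis on $G$ forces the edge sets $E(G_1),\dots,E(G_r)$ to be pairwise disjoint, so $S=S_1\otimes_K\cdots\otimes_K S_r$ and each $I(G_i)$ is generated by monomials lying in $S_i$; in particular each $P_{ij}$ is a monomial ideal in the variables of $S_i$, whose radical $\mathfrak p_{ij}$ is a nonzero monomial prime in those variables (nonzero because $G_i$ has an edge, hence $I(G_i)$ is a proper nonzero ideal). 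By Corollary~\ref{many} and the discussion following it, $I(G)=I(G_1)\cdots I(G_r)=I(G_1)\sect\cdots\sect I(G_r)$, so substituting the given decompositions gives
\[
I(G)=\Sect_{i=1}^{r}\Sect_{j=1}^{n_i}P_{ij},
\]
a primary decomposition of $I(G)$; the substance of the statement is that it is \emph{minimal}, after which the dimension formula is immediate.

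I would then check the two defining properties of minimality. The radicals $\mathfrak p_{ij}$ are pairwise distinct: for fixed $i$ because $I(G_i)=\Sect_jP_{ij}$ is minimal, and for $i\neq i'$ because $\mathfrak p_{ij}$ and $\mathfrak p_{i'j'}$ are generated by disjoint nonempty sets of variables. No component $P_{i_0j_0}$ may be dropped: by minimality of the decomposition of $I(G_{i_0})$ there is a monomial $w\in S_{i_0}$ with $w\in\Sect_{j\neq j_0}P_{i_0j}$ and $w\notin P_{i_0j_0}$; picking a monomial generator $v_i\in I(G_i)$ for each $i\neq i_0$ and setting $v=\prod_{i\neq i_0}v_i$, the monomial $wv$ lies in $P_{ij}$ for every $(i,j)\neq(i_0,j_0)$ --- because $w\in P_{i_0j}$ when $i=i_0$ and $v_i\in I(G_i)\subseteq P_{ij}$ when $i\neq i_0$ --- while $wv\notin P_{i_0j_0}$, since a monomial belongs to the $S_{i_0}$-monomial ideal $P_{i_0j_0}$ if and only if its $S_{i_0}$-component does, and that component is $w$. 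Hence $wv\in\bigl(\Sect_{(i,j)\neq(i_0,j_0)}P_{ij}\bigr)\setminus I(G)$. This establishes minimality, and in particular $\Ass(S/I(G))=\{\mathfrak p_{ij}\}$. (Alternatively, both points can be read off from the standard compatibility of primary decomposition with the faithfully flat extensions $S_i\hookrightarrow S$.)

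For the dimension, $\height I(G)=\min\{\height\mathfrak p:\mathfrak p\in\Min I(G)\}=\min_{i,j}\height\mathfrak p_{ij}$, the minimum over $\Min I(G)\subseteq\{\mathfrak p_{ij}\}$ being attained at a minimal prime; since each $\mathfrak p_{ij}$ is a monomial prime its height equals the number of variables generating it --- the same whether computed in $S$ or in $S_i$ --- so $\min_j\height\mathfrak p_{ij}=\height I(G_i)$ and therefore $\height I(G)=\min_{1\le i\le r}\height I(G_i)$. Combined with $\dim S/I(G)=\dim S-\height I(G)$ this yields the displayed formula. The only step carrying real content is showing that no $P_{i_0j_0}$ is redundant, that is, producing the witness $wv$; this is precisely where disjointness of the variable sets is used essentially, everything else being a direct appeal to Corollary~\ref{many} or routine bookkeeping with monomials in disjoint variables.
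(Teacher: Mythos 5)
Your proof is correct and follows essentially the same route as the paper, which simply observes that minimality follows from the fact that the components $P_{ij}$ and $P_{i'j'}$ for $i\neq i'$ involve disjoint sets of variables. You have merely filled in the details the paper leaves implicit (the explicit witness $wv$ for irredundancy, the distinctness of the radicals, and the height computation), all of which check out.
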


\begin{proof}
It remains to be shown that the primary decomposition is minimal. But this follows from the fact that for $i\neq i'$ the prime ideal $P_{ij}$ and $P_{i'j}$ are generated by a disjoint sets of variables.
\end{proof}

\section{Resolutions of graded ideals}
\label{Section-resolutions}
Before studying further algebraic and homologic properties of monomial cut ideals, some general methods are needed to consider situations where graded ideals $I,J$ in a (standard) graded polynomial ring $P=K[x_1,\dots,x_n]$ over a field $K$ satisfy $I\cap J=IJ$.

In this section we fix the following notation. Let $0\neq I, J\subset P$ be graded ideals, let $F_\centerdot$  be the minimal graded free resolution of  $P/I$, and let $G_\centerdot$  be the minimal graded free resolution of $P/J$. Consider the subcomplex
\[
C_\centerdot \subset F_\centerdot \tensor G_\centerdot
\text{ with }
C_0=F_0\tensor G_0 \text{ and } C_i=F_i\tensor G_0\dirsum F_0\tensor G_i.
\]
Let
\[
D_\centerdot= (F_\centerdot \tensor G_\centerdot)/C_\centerdot
\]
be the induced quotient complex. The  next result is a kind of ``well-known'' fact in commutative algebra.
To the best knowledge of the authors it appeared the first time in \cite{H-Habil}, but there exists no reference to an article or book. For the convenience of the reader we present a short proof.

\begin{Proposition}
\label{D}
Let $0\neq I, J\subset P$ be graded ideals such that $I\sect J=IJ$. Then
$
D_0=D_1=0
$
and
\[
H_i(D_\centerdot)=0
=
\begin{cases}
IJ&\text{if } i=2,\\
0&\text{if } i>2.
\end{cases}
\]
In particular, the complex $D_\centerdot$, homologically shifted by $-2$,  is a minimal graded free resolution of $IJ$.
\end{Proposition}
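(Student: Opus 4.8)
The plan is to run the long exact homology sequence attached to the given short exact sequence of complexes
\[
0\To C_\centerdot \To F_\centerdot\tensor G_\centerdot \To D_\centerdot\To 0,
\]
keeping in mind the standard identification $H_i(F_\centerdot\tensor G_\centerdot)=\Tor_i^P(P/I,P/J)$ (valid because $F_\centerdot$ is a bounded-below complex of free modules quasi-isomorphic to $P/I$), so that in particular $H_0(F_\centerdot\tensor G_\centerdot)=P/(I+J)$. The equalities $D_0=D_1=0$ require no argument: in homological degrees $0$ and $1$ the subcomplex $C_\centerdot$ literally equals $F_\centerdot\tensor G_\centerdot$, so these two quotients vanish, and hence $H_0(D_\centerdot)=H_1(D_\centerdot)=0$ as well.

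Next I would compute $H_\centerdot(C_\centerdot)$. Using $F_0=G_0=P$, in every homological degree $\geq 1$ the complex $C_\centerdot$ reduces to $F_\centerdot\oplus G_\centerdot$ with block-diagonal differential, while $C_0=P$; concretely there is a short exact sequence of complexes
\[
0\To P\To F_\centerdot\oplus G_\centerdot\To C_\centerdot\To 0,
\]
with $P$ placed in homological degree $0$ and embedded antidiagonally into $F_0\oplus G_0=P\oplus P$ by $a\mapsto(a,-a)$. Its long exact homology sequence gives $H_i(C_\centerdot)=H_i(F_\centerdot)\oplus H_i(G_\centerdot)=0$ for $i\geq 2$, together with the exact sequence $0\to H_1(C_\centerdot)\to P\to P/I\oplus P/J\to H_0(C_\centerdot)\to 0$. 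Reading it off: $H_1(C_\centerdot)$ is the kernel of $a\mapsto(\bar a,-\bar a)$, namely $I\cap J$; and $H_0(C_\centerdot)\cong P/(I+J)$. Moreover the comparison map $H_0(C_\centerdot)\to H_0(F_\centerdot\tensor G_\centerdot)$ is the identity of $P/(I+J)$.

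The heart of the matter is the implication: if $I\cap J=IJ$ then $\Tor_i^P(P/I,P/J)=0$ for \emph{all} $i\geq 1$, not merely for $i=1$. That $I\cap J=IJ$ is equivalent to the vanishing of $\Tor_1$ is the familiar computation $\Tor_1^P(P/I,P/J)\cong(I\cap J)/IJ$; the passage to higher indices is a rigidity phenomenon — localizing at an arbitrary prime of $P$ one works over an equicharacteristic regular local ring, and there Auslander's rigidity theorem for $\Tor$ propagates the vanishing of $\Tor_1$ to all higher $\Tor$. (In every situation arising in this paper the ideals $I$ and $J$ are built from disjoint sets of variables, where this $\Tor$-vanishing is immediate, so for the applications only the trivial case is needed.) Granting $H_i(F_\centerdot\tensor G_\centerdot)=0$ for $i\geq 1$, the long exact sequence collapses: for $i\geq 3$ one has $H_{i-1}(C_\centerdot)=H_i(C_\centerdot)=0$, hence $H_i(D_\centerdot)\cong H_i(F_\centerdot\tensor G_\centerdot)=0$; and for $i=2$ the exact piece
\[
0=H_2(F_\centerdot\tensor G_\centerdot)\To H_2(D_\centerdot)\To H_1(C_\centerdot)\To H_1(F_\centerdot\tensor G_\centerdot)=0
\]
yields $H_2(D_\centerdot)\cong H_1(C_\centerdot)=I\cap J=IJ$.

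Finally I would record that each $D_i=\Dirsum_{p+q=i,\,p,q\geq 1}F_p\tensor G_q$ is a free $P$-module and that the differential of $D_\centerdot$ has all entries in $\mm$, since the differentials of the minimal resolutions $F_\centerdot$ and $G_\centerdot$ do and this is inherited by the tensor product and by the quotient complex. Hence $D_\centerdot$, regraded so that $D_2$ sits in homological degree $0$, is a minimal complex of graded free modules with homology $IJ$ in degree $0$ and zero elsewhere, i.e.~the minimal graded free resolution of $IJ$. (As a cross-check, that regraded complex is exactly $\bar F_\centerdot\tensor\bar G_\centerdot$, the tensor product of the minimal free resolutions of $I$ and of $J$, which explains the shift by $-2$.) I expect the only genuinely non-routine step to be the $\Tor$-rigidity input; everything else is bookkeeping with the two long exact homology sequences above.
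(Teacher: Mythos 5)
Your argument is correct and follows essentially the same route as the paper's proof: the same auxiliary short exact sequence $0\to P\to F_\centerdot\dirsum G_\centerdot\to C_\centerdot\to 0$ to compute $H_\centerdot(C_\centerdot)$, the same appeal to Auslander's rigidity of $\Tor$ to upgrade $\Tor_1(P/I,P/J)=0$ to the vanishing of all higher $\Tor$, and the same long exact sequence attached to $0\to C_\centerdot\to F_\centerdot\tensor G_\centerdot\to D_\centerdot\to 0$. The only additions are your explicit verification of minimality and the identification of the shifted complex with the tensor product of the minimal resolutions of $I$ and $J$, points the paper leaves implicit.
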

\begin{proof}
Since $C_i=(F_\centerdot\tensor G_\centerdot)_i$ for $i=0, 1$, it follows that $D_0=D_1=0$.

In order to compute the homology of $C_\centerdot$ we consider the long exact sequence
\[
0\to L_\centerdot \to F_\centerdot\dirsum G_\centerdot\to C_\centerdot\to 0,
\]
where $L_0=P$ and $L_i=0$ if $i\neq 0$. Furthermore,  $L_\centerdot \to F_\centerdot\dirsum G_\centerdot$ is  the complex homomorphism with $L_0\to F_0\dirsum G_0$, $1\mapsto (1,1)$ and $L_i\to F_i\dirsum G_i$ is the  zero map for $i\neq 0$.
This gives us the exact  homology sequences
\[
0\to H_1(C_\centerdot)\to H_0(L_\centerdot)\to H_0(F_\centerdot\dirsum G_\centerdot)\to H_0(C_\centerdot)\to 0,
\]
and
\[
0\to H_i(C_\centerdot)\to H_{i-1}(L_\centerdot) \to 0
\]
for $i\geq 2$.
Here we used that  $H_i(F_\centerdot\dirsum G_\centerdot)=0$ for $i\geq 1$, because $F_\centerdot\dirsum G_\centerdot$ is the resolution of $P/I\dirsum P/J$. Since $H_{i-1}(L_\centerdot)=0$ for $i\geq 2$, it follows that $H_i(C_\centerdot)=0$ for $i\geq 2$.

One sees that  $H_0(C_\centerdot)=P/(I+J)$, $H_0(F_\centerdot\dirsum G_\centerdot)=P/I\dirsum P/J$, and $H_0(L_\centerdot)=P$. Thus, we obtain  the following  exact sequence
\[
0\to H_1(C_\centerdot)\to P\to P/I\dirsum P/J\to P/(I+J)\to 0. \]
Since $\Ker(P/I\dirsum P/J\to P/(I+J))=P/(I\sect J)$ and  $I\sect J=IJ$, we see that $H_1(C_\centerdot)=IJ$.

By the definition of $D_\centerdot$ we have the following short exact sequence of complexes
\[
0\to C_\centerdot \to  F_\centerdot\tensor G_\centerdot\to D_\centerdot \to 0,
\]
which gives rise to the long exact sequence
\[
\cdots \to H_{i+1}(F_\centerdot\tensor G_\centerdot) \to H_{i+1}(D_\centerdot)\to H_{i}(C_\centerdot)\to H_{i}(F_\centerdot\tensor G_\centerdot)\to\cdots.
\]
Since  $\Tor_1(P/I,P/J)\iso (I\sect J)/IJ$ and $I\sect J=IJ$, it follows that $\Tor_1(P/I,P/J)=0$.  Rigidity of $\Tor$ (see \cite{A}) implies that $\Tor_i(P/I,P/J)=0$ for all $i\geq 1$. Hence  $H_{i}(F_\centerdot\tensor G_\centerdot)=0$ for $i>0$. Thus, the above exact sequence implies that $H_{i+1}(D_\centerdot)\iso H_{i}(C_\centerdot)$ for all $i\geq 1$. Since, as we have seen before,  $H_1(C_\centerdot)=IJ$ and
$H_i(C_\centerdot)=0$ for $i>1$, the desired conclusion follows.
\end{proof}

Let $0\neq M$ be a finitely generated graded $P$ module and let $\beta_{ij}(M)=\text{Tor}_i(M,K)_j$
be its graded \emph{Betti numbers}. Then
$\beta_i(M)=\sum_{j\in \ZZ}\beta_{ij}(M)$ are its \emph{total Betti numbers}. Let
\[
\projdim M = \max\{ i : \beta_{ij}(M)\neq 0 \text{ for some } j\}
\]
be its \emph{projective dimension}
and
\[
\reg M= \max\{ j : \beta_{i, i+j}(M)\neq 0 \text{ for some } i\}
\]
be its \emph{Castelnuovo-Mumford regularity}. An immediate consequence of Proposition \ref{D} is:
\begin{Corollary}
\label{CorTo-D}
Let $0\neq I, J\subset P$ be graded ideals such that $I\sect J=IJ$. Then
\[
\projdim IJ=\projdim I +\projdim J
\text{ and }
\reg IJ=\reg I +\reg J.
\]
\end{Corollary}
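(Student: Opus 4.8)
The plan is to derive Corollary~\ref{CorTo-D} directly from the structural description of the resolution of $IJ$ given in Proposition~\ref{D}. Since $I\sect J=IJ$, that proposition tells us that the complex $D_\centerdot$, homologically shifted by $-2$, is a \emph{minimal} graded free resolution of $IJ$. Hence for all $i\geq 0$ and all $j$ we have
\[
\beta_{i,j}(IJ)=\beta_{i+2,j}^{F_\centerdot\tensor G_\centerdot}(?)=\rank (D_{i+2})_j,
\]
and since $D_{i+2}=(F_\centerdot\tensor G_\centerdot)_{i+2}$ for $i+2\geq 2$, i.e. for all $i\geq 0$, the free module $D_{i+2}$ equals $(F_\centerdot\tensor G_\centerdot)_{i+2}=\Dirsum_{a+b=i+2}F_a\tensor G_b$. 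Taking graded ranks and using that a minimal free resolution computes Betti numbers, we obtain
\[
\beta_i(IJ)=\sum_{a+b=i+2}\beta_a(P/I)\,\beta_b(P/J)
\]
for every $i\geq 0$.

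From this formula both statements follow by a short bookkeeping argument. For the projective dimension: $\projdim IJ=\max\{i:\beta_i(IJ)\neq 0\}$, and by the formula this maximum is attained precisely when $i+2=a+b$ with $a\leq \projdim(P/I)$, $b\leq\projdim(P/J)$ both maximal, i.e. $i+2=\projdim(P/I)+\projdim(P/J)$. Since $\projdim I=\projdim(P/I)-1$ and likewise for $J$, we get $\projdim IJ=\projdim(P/I)+\projdim(P/J)-2=\projdim I+\projdim J$. The cancellation here is clean because the product $\beta_a(P/I)\beta_b(P/J)$ of nonnegative integers is nonzero iff both factors are, so there is no possibility of cancellation between summands. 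For the regularity: $\reg IJ=\max\{j-i:\beta_{i,i+j}(IJ)\neq 0\}$; writing $\beta_{i,*}(IJ)$ as the sum over $a+b=i+2$ of $\beta_{a,*}(P/I)\beta_{b,*}(P/J)$ (convolution in internal degree as well), the maximal internal shift above homological degree is additive, giving $\reg IJ=\reg(P/I)+\reg(P/J)$, and again $\reg I=\reg(P/I)$, $\reg J=\reg(P/J)$ since $\reg M=\reg(M[-\text{shift}])$ is unchanged passing between $I$ and $P/I$ up to the trivial degree-zero summand that contributes nothing above homological degree $0$.

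The only genuinely delicate point is the nonvanishing (non-cancellation) claim in both cases: one must check that the extremal summand in $\beta_i(IJ)=\sum_{a+b=i+2}\beta_a(P/I)\beta_b(P/J)$ is not accidentally cancelled. For $\projdim$ this is immediate, as just noted, since all Betti numbers are nonnegative and the top summand has both factors strictly positive. For $\reg$ the subtlety is that one needs a \emph{graded} nonvanishing: there exist internal degrees $j_1$ and $j_2$ with $\beta_{a,a+j_1}(P/I)\neq 0$ and $\beta_{b,b+j_2}(P/J)\neq 0$ realizing $j_1=\reg(P/I)$ and $j_2=\reg(P/J)$ (for possibly different homological positions $a,b$); then the corresponding entry of $F_a\tensor G_b$ sits in homological degree $a+b=i+2$ and internal degree $(a+j_1)+(b+j_2)=(i+2)+(j_1+j_2)$, and since $F_\centerdot\tensor G_\centerdot$ agrees with the minimal resolution $D_\centerdot$ in homological degrees $\geq 2$, this Betti number of $IJ$ is genuinely nonzero. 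I expect writing this last paragraph carefully — the passage from total to graded Betti numbers and the absence of cancellation — to be the main (mild) obstacle; everything else is formal manipulation of the resolution supplied by Proposition~\ref{D}.
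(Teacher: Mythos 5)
Your overall strategy is exactly the one the paper intends: Proposition~\ref{D} hands you a minimal graded free resolution of $IJ$ whose graded ranks are determined by those of $F_\centerdot$ and $G_\centerdot$, and both invariants are then read off, with no cancellation possible because Betti numbers are nonnegative. However, two of your intermediate identities are false as stated. First, $D_{i+2}$ is \emph{not} equal to $(F_\centerdot\tensor G_\centerdot)_{i+2}$: by definition $D_\centerdot=(F_\centerdot\tensor G_\centerdot)/C_\centerdot$ with $C_{i+2}=F_{i+2}\tensor G_0\dirsum F_0\tensor G_{i+2}$, and since $F_0=G_0=P$ these summands are nonzero whenever $F_{i+2}$ or $G_{i+2}$ is. The correct rank count is
\[
\beta_{i,j}(IJ)=\sum_{\substack{a+b=i+2\\ a\geq 1,\, b\geq 1}}\;\sum_{j_1+j_2=j}\beta_{a,j_1}(P/I)\,\beta_{b,j_2}(P/J)
=\sum_{a+b=i}\;\sum_{j_1+j_2=j}\beta_{a,j_1}(I)\,\beta_{b,j_2}(J),
\]
the second equality because $\beta_{a,j}(P/I)=\beta_{a-1,j}(I)$ for $a\geq1$. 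Your unrestricted sum already fails at $i=0$, where it would give $\mu(IJ)=\mu(I)\mu(J)+\beta_2(P/I)+\beta_2(P/J)$. Second, in the regularity paragraph you assert $\reg I=\reg(P/I)$, which is off by one (for a nonzero proper graded ideal $\reg I=\reg(P/I)+1$), and you also drop the contribution of the homological shift by $-2$, which adds $2$ to $j-i$; the two mistakes cancel, so your final equality is right, but the displayed intermediate claim $\reg IJ=\reg(P/I)+\reg(P/J)$ is wrong (the correct version is $\reg IJ=\reg(P/I)+\reg(P/J)+2$).

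None of this is fatal: the extremal terms in both maximizations automatically have $a,b\geq 1$, so the conclusions survive. The clean repair is to work with the reindexed formula above, which says $P_{IJ}(x,y)=P_I(x,y)\,P_J(x,y)$ (this is precisely the paper's Corollary~\ref{numerical}); additivity of $\projdim$ and of $\reg$ then drops out immediately, with no shifts left to track.
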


\section{Homological properties related to unions of graphs}
\label{Section-hom-properties}

Algebraic and homological properties of monomial cut ideals have been studied in \cite{O}
(in particular, see \cite[Theorem 4.5]{O} and \cite[Theorem 4.6]{O})
In this section we focus on the situation of Corollary \ref{many}. As
an immediate consequence of the results of Section \ref{Section-resolutions} we obtain:
\begin{Corollary}
\label{Cor-pd-reg}
Let $G_1,\ldots,G_r$ be (non-trivial) finite simple graphs such  that
\[
|(V(G_1)\union\cdots \union V(G_{i-1}))\sect V(G_i)|\leq 1
\text{ for all } i=2,\ldots,r
\]
and consider
$
G=G_1\union \cdots\union G_r.
$
Then
\begin{eqnarray}
\label{proj}
\projdim I(G)=\sum_{k=1}^r\projdim I(G_k),
\end{eqnarray}
and
\begin{eqnarray}
\label{reg}
\reg I(G)=\sum_{k=1}^r\reg I(G_k).
\end{eqnarray}
\end{Corollary}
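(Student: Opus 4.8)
The plan is to derive Corollary~\ref{Cor-pd-reg} by induction on $r$, using Corollary~\ref{many} to write $I(G)$ as a product of ideals in disjoint sets of variables and then applying Corollary~\ref{CorTo-D} at each step. First I would treat the base case $r=1$, which is trivial. For the inductive step, set $H=G_1\union\cdots\union G_{r-1}$ and note that the hypotheses on the $G_i$ guarantee that $|V(H)\sect V(G_r)|\leq 1$, so that $G=H\#_{K_n}G_r$ for some $n\in\{-1,0\}$. By Proposition~\ref{near} we have $I(G)=I(H)I(G_r)$, and as observed in the remarks following Corollary~\ref{many}, the ideals $I(H)$ and $I(G_r)$ are monomial ideals in pairwise disjoint sets of variables (the edge sets being disjoint), hence $I(H)\sect I(G_r)=I(H)I(G_r)$.

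Having verified the hypothesis $I\sect J=IJ$ of Corollary~\ref{CorTo-D} with $I=I(H)$ and $J=I(G_r)$, I would conclude
\[
\projdim I(G)=\projdim I(H)+\projdim I(G_r)
\quad\text{and}\quad
\reg I(G)=\reg I(H)+\reg I(G_r).
\]
Applying the induction hypothesis to $I(H)=I(G_1)\cdots I(G_{r-1})$ then yields $\projdim I(H)=\sum_{k=1}^{r-1}\projdim I(G_k)$ and similarly for the regularity, and substituting gives the desired formulas \eqref{proj} and \eqref{reg}.

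One subtlety worth addressing explicitly is that Corollary~\ref{CorTo-D} is stated for graded ideals $I,J$ in a single polynomial ring $P$, whereas here $I(H)$ and $I(G_r)$ naturally live in polynomial rings on disjoint variable sets; one should work in the ambient ring $S$ (or the subring generated by all relevant variables) and observe that extending the ground ring by polynomial variables not occurring in $I$ or $J$ changes neither the projective dimension nor the regularity, so the hypotheses and conclusions of Corollary~\ref{CorTo-D} transfer without change. I do not anticipate a genuine obstacle here; the only thing to be careful about is keeping track of which ambient ring each ideal is regarded in, and confirming that the condition $|V(H)\sect V(G_r)|\le 1$ really does follow from the stated hypotheses (it does, since $V(H)\sect V(G_r)=(V(G_1)\union\cdots\union V(G_{r-1}))\sect V(G_r)$).
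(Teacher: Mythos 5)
Your proposal is correct and follows essentially the same route as the paper, which simply invokes Corollary~\ref{CorTo-D} by induction on $r$; the intermediate facts you spell out (that $I(H)$ and $I(G_r)$ involve disjoint sets of variables because the edge sets are disjoint, hence $I(H)\sect I(G_r)=I(H)I(G_r)$) are exactly the observations the paper records after Corollary~\ref{many}. Your remark about the ambient ring is a reasonable extra care that the paper leaves implicit.
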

\begin{proof}
The assertion follows from Corollary~\ref{CorTo-D} by induction on $r$.
\end{proof}

For the next result recall that for any graded ideal $I\subset S$, the ring $S/I$ is \emph{Cohen--Macaulay} if and only if $\height I=\projdim S/I$. We say that $I$
has a \emph{linear resolution}
if and only if there exists an integer $d\geq 1$ such that
$\beta_{ij}(I)=0$ for any $j\neq i+d$.

\begin{Proposition}
\label{Prop-CM-linres}
Let $G$ be a graph  as in Corollary~\ref{Cor-pd-reg}.
\begin{enumerate}
\item[(a)]
If $S/I(G)$ is Cohen--Macaulay. Then $r=1$.
\item[(b)]
$I(G)$ has a linear resolution if and only if  each $I(G_i)$ has a linear resolution.
\end{enumerate}
\end{Proposition}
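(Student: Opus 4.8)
The proof of Proposition~\ref{Prop-CM-linres} will rest entirely on the additivity formulas of Corollary~\ref{Cor-pd-reg} together with a few elementary observations about the ideals $I(G_i)$. The crucial facts I would establish first are: (i) each $I(G_i)$ is a nonzero proper ideal generated in degree $|E(G_i)| \geq 1$, so that $\projdim I(G_i) \geq 0$ and in fact $\projdim I(G_i) \geq 1$ whenever $I(G_i)$ has more than one generator, hence whenever $|V(G_i)| \geq 2$ and $G_i$ is connected with at least one edge — this uses Lemma~\ref{Lemma-generators}, which gives $2^{n_i - 1}$ generators for a connected $G_i$ with $n_i$ vertices, so $\projdim I(G_i) \geq 1$ precisely when $n_i \geq 2$, which always holds since every $G_i$ is non-trivial; and (ii) $\reg I(G_i) \geq |E(G_i)| - 1 \geq 0$, with $\reg I(G_i)$ controlling linearity via $\reg I(G_i) = |E(G_i)|$ iff the resolution is linear (the generators all sit in degree $|E(G_i)|$).

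\textbf{Part (a).} Suppose $S/I(G)$ is Cohen--Macaulay, i.e.\ $\height I(G) = \projdim S/I(G) = \projdim I(G) + 1$. By Corollary~\ref{Cor-pd-reg}, $\projdim I(G) = \sum_{k=1}^r \projdim I(G_k)$. The idea is that each summand is at least $1$: since each $G_k$ is a non-trivial finite simple graph with $|V(G_k)| \geq 2$, and since the number of minimal monomial generators of $I(G_k)$ is $2^{|V(G_k)| - c_k} \geq 2$ (by Lemma~\ref{Lemma-generators}, where $c_k$ is the number of components of $G_k$; note $|V(G_k)| - c_k \geq 1$ because $G_k$ has at least one edge, so at least one component has $\geq 2$ vertices while the count $|V(G_k)| - c_k$ equals the number of edges in a spanning forest, which is $\geq 1$), the ideal $I(G_k)$ is not principal, hence $\projdim I(G_k) \geq 1$. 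On the other hand, I would combine this with the fact that $S/I(G)$ Cohen--Macaulay forces some sharp constraint. The cleanest route: $\reg I(G) = \sum_k \reg I(G_k)$ and $\reg I(G_k) \geq |E(G_k)| - 1$. But actually the simplest argument is: the dimension formula in Corollary~\ref{manyprimary} gives $\dim S/I(G) = 2|V(G)| - \min_i \height I(G_i)$, while the Auslander--Buchsbaum formula gives $\projdim S/I(G) = 2|V(G)| - \depth S/I(G)$; Cohen--Macaulayness equates these, forcing $\depth S/I(G) = \min_i \height I(G_i)$, whereas $\projdim I(G) = \sum_k \projdim I(G_k) \geq r$ gives $\depth S/I(G) = 2|V(G)| - \projdim I(G) - 1 \leq 2|V(G)| - r - 1$. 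For $r \geq 2$ one then needs to check this contradicts $\height I(G) = \min_i \height I(G_i)$; I expect the clean contradiction to come from comparing $\codim$: $\height I(G) = \min_i \height I(G_i)$ but $\projdim I(G) = \sum_i \projdim I(G_i) \geq \sum_i \height I(G_i) - 1$ (using that $\projdim I(G_i) \geq \height I(G_i) - 1$ always, with equality iff $S/I(G_i)$ is CM), so Cohen--Macaulayness of $S/I(G)$ would force $\sum_i (\height I(G_i) - 1) \leq \min_i \height I(G_i) - 1$, i.e.\ $\sum_i \height I(G_i) \leq \min_i \height I(G_i) + (r-1)$, which for $r \geq 2$ forces each $\height I(G_i) \leq 1$; but a monomial ideal generated by $\geq 2$ squarefree monomials of positive degree in a polynomial ring over $K$ has height $\geq 1$, and in fact I would check $\height I(G_i) \geq 2$ here (each minimal prime contains at least two variables since $I(G_i)$ has two generators with disjoint support in the relevant variables after using $u_{\emptyset} = t_1\cdots t_m$ and another generator), yielding the contradiction and hence $r = 1$.

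\textbf{Part (b).} This is a direct consequence of the regularity formula. If each $I(G_i)$ has a linear resolution, then since its generators lie in the single degree $d_i := |E(G_i)|$, linearity means $\reg I(G_i) = d_i$. By Corollary~\ref{Cor-pd-reg}, $\reg I(G) = \sum_i d_i = |E(G)|$ (the edge sets are disjoint, so $\sum_i |E(G_i)| = |E(G)|$), and since every generator of $I(G)$ has degree $|E(G)|$ by definition, $\reg I(G) = |E(G)|$ forces $\beta_{i,j}(I(G)) = 0$ for $j \neq i + |E(G)|$, i.e.\ $I(G)$ has a linear resolution. Conversely, if $I(G)$ has a linear resolution, then $\reg I(G) = |E(G)| = \sum_i |E(G_i)|$; since always $\reg I(G_i) \geq |E(G_i)|$ (generators in degree $|E(G_i)|$) and $\reg I(G) = \sum_i \reg I(G_i)$, we get $\sum_i \reg I(G_i) = \sum_i |E(G_i)|$ with each term $\geq$ the corresponding term, forcing $\reg I(G_i) = |E(G_i)|$ for all $i$, i.e.\ each $I(G_i)$ has a linear resolution.

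\textbf{Main obstacle.} The only delicate point is part (a): pinning down the right numerical inequality and justifying $\projdim I(G_i) \geq \height I(G_i) - 1$ with the precise conditions for equality, plus confirming $\height I(G_i) \geq 2$ for every non-trivial $G_i$ (equivalently, that $I(G_i)$ is not a principal ideal up to radical, which follows from Lemma~\ref{Lemma-generators} giving at least two minimal generators whose supports are not nested). Everything in part (b) is bookkeeping with the degree of the generators and the additivity of regularity.
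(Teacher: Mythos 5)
Your proof of part (b) is exactly the paper's argument: generators in degree $|E(G_i)|$ give $\reg I(G_i)\geq |E(G_i)|$, additivity of regularity from Corollary~\ref{Cor-pd-reg} turns the sum of these inequalities into an equality statement, and linearity is equivalent to equality termwise. For part (a) you take a genuinely different, and more self-contained, route. The paper simply quotes \cite[Theorem 4.2]{O}, which says $\height I(G)=2$ for \emph{every} graph; Cohen--Macaulayness then forces $\projdim I(G)=1$, and since each $I(G_k)$ is non-principal (Lemma~\ref{Lemma-generators}) each summand in $\projdim I(G)=\sum_k\projdim I(G_k)$ is at least $1$, so $r=1$ in one line. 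You avoid that external input by combining $\height I(G)=\min_i\height I(G_i)$ (read off from Corollary~\ref{manyprimary}), the general bound $\projdim S/I\geq\height I$, and the claim $\height I(G_i)\geq 2$; the resulting inequality $\min_i h_i\geq \sum_i(h_i-1)+1\geq \min_i h_i+(r-1)$ does force $r=1$. This works, and has the virtue of not depending on Olteanu's height computation, at the cost of being longer and of requiring you to actually prove $\height I(G_i)\geq 2$. Your parenthetical justification of that claim ("two generators with disjoint support") is not quite the right statement --- $u_\emptyset$ and $u_{\{v\}}$ share all the $t_j$ for edges $e_j$ not incident to $v$ --- but the claim itself is easy: a height-one minimal prime would be a single variable dividing every cut monomial, which is impossible since $s_j\nmid u_\emptyset$ and $t_j\nmid u_{\{a\}}$ for $e_j=\{a,b\}$. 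With that fixed, your argument is complete; I would only trim the several abandoned attempts in the middle of part (a), since the final inequality chain is the one that closes.
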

\begin{proof}
(a)   By \cite[Theorem 4.2]{O}, $\height I(G)=2$ for any graph $G$. Hence,  if  $S/I(G)$ is Cohen--Macaulay, then $\projdim I(G)=1$, which by  (\ref{reg}) is only possible if $r=1$, since the ideals $I(G_k)$ are never principal ideals (as can be seen, e.g., by Lemma \ref{Lemma-generators}).

(b) Let $I\subset S$ be a graded ideal generated in degree $d$. Then $d\leq \reg I$, and equality  holds if and only if  $I$ has linear resolution.  Since each $I(G_i)$ is generated in degree $|E(G_i)|$,  it follows that $G$ is generated in degree $|E(G)|=\sum_{i=1}^r|E(G_i)|$. Hence, by using (\ref{reg}), we obtain that  $|E(G)|\leq \sum_{I=1}^r \reg I(G_i)=\reg I(G)$. Therefore,  $|E(G)|=\reg I(G)$ if and only if $|E(G_i)|=\reg I(G_i)$ for all $i=1,\dots,r$, as desired.
\end{proof}

Actually, in the situation of Corollary~\ref{Cor-pd-reg} the graded Betti-numbers of $I(G)$ can be computed once we know the  graded Betti-numbers of the $I(G_i)$.

More generally, let $0\neq M$ be a finitely generated graded $P$-module for a standard graded polynomial ring $P$ and
with  minimal graded free resolution $F_{\centerdot}$.  Then we set
\[
P_M(x,y)=\sum_{i\geq 0}\sum_{j\in \ZZ} \beta_{ij}(M)x^iy^j.
\]
This polynomial encodes the numerical data of the resolution.
\begin{Corollary}
\label{numerical}
With the assumptions and notation of Corollary~\ref{Cor-pd-reg}we have
\[
P_{I(G)}(x,y)=P_{I(G_1)}(x,y)\cdots P_{I(G_r)}(x,y).
\]
\end{Corollary}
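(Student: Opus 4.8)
The plan is to derive Corollary~\ref{numerical} by induction on $r$ from the case $r=2$, so the real content is the two-factor statement: if $I,J\subset P$ are graded ideals with $I\sect J=IJ$, then $P_{IJ}(x,y)=P_I(x,y)P_J(x,y)$. First I would invoke Corollary~\ref{many}, which gives $I(G)=I(G_1)\cdots I(G_r)$, together with the observation recorded right after it that the ideals $I(G_i)$ live in pairwise disjoint sets of variables, so that products and intersections coincide; in particular, at each stage of the induction the hypothesis $I\sect J=IJ$ of Proposition~\ref{D} is satisfied when we split $I(G)=\bigl(I(G_1)\cdots I(G_{r-1})\bigr)\cdot I(G_r)$.

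Next, for the two-factor case I would read off the Betti numbers directly from Proposition~\ref{D}: that proposition says the complex $D_\centerdot=(F_\centerdot\tensor G_\centerdot)/C_\centerdot$, shifted homologically by $-2$, is the minimal graded free resolution of $IJ=I\sect J$, where $F_\centerdot$ resolves $P/I$ and $G_\centerdot$ resolves $P/J$. Since $C_i=F_i\tensor G_0\dirsum F_0\tensor G_i$ for $i\geq 1$ (and $C_0=F_0\tensor G_0$), the graded free module in homological degree $i$ of $D_\centerdot$ is
\[
D_i=(F_\centerdot\tensor G_\centerdot)_i/C_i=\Dirsum_{\substack{p+q=i\\ p\geq 1,\ q\geq 1}}F_p\tensor G_q
\qquad(i\geq 2).
\]
Because the resolution $F_\centerdot$ of $P/I$ has $F_p=\Dirsum_j P(-j)^{\beta_{pj}(P/I)}$ and likewise for $G_\centerdot$, and because $\beta_{pj}(P/I)=\beta_{p-1,j}(I)$ for $p\geq 1$ (truncating the augmentation), the free module $D_i$ after the shift by $-2$ becomes the term in homological degree $i-2$ of the resolution of $IJ$, and its rank in each internal degree is exactly the coefficient one obtains by multiplying $P_I(x,y)$ and $P_J(x,y)$. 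Concretely, writing $a=p-1$, $b=q-1$, a summand $F_p\tensor G_q$ contributes to homological degree $(i-2)=(a+b)$ and the generating function of these contributions over all $a,b\geq 0$ is precisely $P_I(x,y)P_J(x,y)$. Minimality of $D_\centerdot$ (which Proposition~\ref{D} asserts) guarantees that these ranks are the actual graded Betti numbers $\beta_{i,j}(IJ)$, with no cancellation. Hence $P_{IJ}(x,y)=P_I(x,y)P_J(x,y)$.

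Finally I would run the induction: assuming $P_{I(G_1)\cdots I(G_{r-1})}(x,y)=\prod_{k=1}^{r-1}P_{I(G_k)}(x,y)$, apply the two-factor identity to $I=I(G_1)\cdots I(G_{r-1})$ and $J=I(G_r)$ — valid because these two ideals are in disjoint variable sets, so $I\sect J=IJ$ — to conclude $P_{I(G)}(x,y)=\prod_{k=1}^r P_{I(G_k)}(x,y)$.

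The step I expect to be the main (if modest) obstacle is the bookkeeping of the homological shift and the index reindexing: one must be careful that Proposition~\ref{D} resolves the ideal $IJ$ itself (not $P/IJ$), that the resolution $F_\centerdot$ there resolves $P/I$ (so the relevant Betti numbers are those of $P/I$, related to those of $I$ by a shift in homological degree), and that the ``diagonal'' summands $F_p\tensor G_q$ with $p,q\geq 1$ are exactly what survives in $D_\centerdot$. Once the indices are aligned, the identity $P_{IJ}(x,y)=P_I(x,y)P_J(x,y)$ is a formal consequence of $D_i=\bigdirsum_{p+q=i,\,p,q\geq1}F_p\tensor G_q$ and the multiplicativity of generating functions under tensor product of graded free modules; and the projective-dimension and regularity identities of Corollary~\ref{CorTo-D} are recovered as the degrees, in $x$ and in $y-x$ respectively, of the product polynomial.
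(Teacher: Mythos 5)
Your proposal is correct and follows exactly the route the paper intends: the paper's proof of Corollary~\ref{numerical} is simply ``immediate consequence of Proposition~\ref{D},'' and your argument is a careful unpacking of that, via Corollary~\ref{many}, the disjoint-variables observation guaranteeing $I\sect J=IJ$ at each inductive step, and the identification $D_i=\Dirsum_{p+q=i,\ p,q\geq 1}F_p\tensor G_q$ with the attendant shift between $\beta_{p,j}(P/I)$ and $\beta_{p-1,j}(I)$. The index bookkeeping you flag as the main obstacle is handled correctly, so nothing is missing.
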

\begin{proof}
This is an immediate consequence of Proposition \ref{D}.
\end{proof}

\begin{Examples}
{\em

(a) Let $G_1$ and $G_2$ be two 3-cycles with the property that  $G_1$ and $G_2$ have exactly one vertex in common. Then
\[
P_{I(G_1)}=P_{I(G_2)}=3x^2y^6+6xy^5+4y^3.
\]
Thus,
\[
P_{I(G)}= 9x^4y^{12} + 36x^3y^{11} + 36x^2y^{10} + 24x^2y^9 + 48xy^8 + 16y^6.
\]
This example shows that if $I(G_1)$ and $I(G_2)$   have
pure resolution, and
$G=G_1\#_{K_{n}}G_2$ for $n \in \{-1,0\}$,  then $I(G)$ need not to have a pure resolution.

(b) Let $H$ be any finite simple graph, and let $G$ be the graph to which we add a \emph{whisker} $W$,
i.e. is an edge added to $G$ at exactly one common vertex.
Since $I(W)$ is generated by two variables, it follows that $P_{I(W)}=2xy+x^2y^2$. Hence,
\[
P_{I(G)}=P_{I(H)}(2xy+x^2y^2).
\]
In particular,  if $G$ is a forest with $r$ edges, then
\[
P_{I(G)}(x,y)= (2xy+x^2y^2)^r.
\]
}
\end{Examples}

\section{Total Betti-numbers of monomial cut ideals of a cycles}
\label{Section-total-Betti-numbers}

Total Betti numbers of monomial cut ideals are only known in some special cases. For example, in \cite[Corollary 2.8]{O} they are determined if $G$ is a tree.
For a cycle some information is included in \cite[Corollary 3.8]{O} as a vanishing statement. But the non-zero numbers have not been computed in that work.
The aim of this section is to determine the Betti-numbers of a cycle. The proof of the final result needs some preparation.

We denote by $C_n$ the (standard) \emph{$n$-cycle} with $V(C_n)=[n]$, edges $e_i=\{i,i+1\}$ for $i=1,\ldots,n-1$, and $e_n=\{1,n\}$.

\begin{Lemma}
\label{trytoproveit}
Let $\Ac=\{A: A = \{1,n\}\union B \text{ with }  B\subseteq \{2,\ldots,n-1\}\}$ and $\Ac'=\{A: A = \{1\}\union B \text{ with }  B\subseteq  \{2,\ldots,n-1\}\}$. Then
the unique minimal set of monomial generators of $I(C_n)$
is given by
\[
\{u_A\:\; A\in \Ac\union \Ac'\}.
\]
\end{Lemma}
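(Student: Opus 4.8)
The plan is to use the characterization of minimal generators from Lemma~\ref{Lemma-generators}: since $C_n$ is connected on $n$ vertices, $I(C_n)$ has exactly $2^{n-1}$ minimal monomial generators, and these correspond bijectively to the distinct cut vectors $\chi^A$, each pair $\{A,A^c\}$ giving one generator. So the whole task reduces to producing a system of representatives for the $2^{n-1}$ equivalence classes of subsets $A\subseteq[n]$ under $A\sim A^c$, and checking that $\Ac\cup\Ac'$ is exactly such a system.

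First I would observe that $|\Ac| = |\Ac'| = 2^{n-2}$, since each is parametrized freely by $B\subseteq\{2,\dots,n-1\}$, so $|\Ac\cup\Ac'| = 2^{n-1}$ provided $\Ac\cap\Ac'=\emptyset$; and indeed the sets in $\Ac$ contain $n$ while those in $\Ac'$ do not (note $n\notin B$ in either case, so no accidental overlap), so the union is disjoint and has the right cardinality $2^{n-1}$. Next I would show every subset $A\subseteq[n]$ has exactly one representative in $\Ac\cup\Ac'$ after replacing $A$ by $A^c$ if needed. Given arbitrary $A$, replace it by $A^c$ if necessary so that $1\in A$; this is always possible and the choice is forced unless $A=\emptyset$ or $A=[n]$, but in that edge case $u_\emptyset = u_{[n]} = t_1\cdots t_n$ corresponds to $A'=\{1\}\in\Ac'$ (take $B=\emptyset$), so it is still covered. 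Now with $1\in A$: if $n\in A$ then $A = \{1,n\}\cup B$ with $B = A\cap\{2,\dots,n-1\}$, so $A\in\Ac$; if $n\notin A$ then $A = \{1\}\cup B$ with $B=A\cap\{2,\dots,n-1\}$, so $A\in\Ac'$. Thus every class meets $\Ac\cup\Ac'$.

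Finally I would argue that distinct elements of $\Ac\cup\Ac'$ give distinct generators, i.e.\ lie in distinct classes — equivalently, that the $2^{n-1}$ sets listed are pairwise non-complementary. If $A,A'\in\Ac\cup\Ac'$ with $A\ne A'$, they cannot be equal (they are listed without repetition), so I must rule out $A' = A^c$. Suppose $A'=A^c$. If both contain $1$, that is impossible since $1\in A\cap A'$ would force $1\notin A^c = A'$. The only remaining possibility is that one contains $1$ and the other does not; but by construction every set in $\Ac\cup\Ac'$ contains $1$, contradiction. Hence all $2^{n-1}$ listed monomials $u_A$ are distinct, and by the cardinality count from Lemma~\ref{Lemma-generators} they constitute the full minimal generating set.

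The argument is essentially bookkeeping, so there is no real obstacle; the one point to state carefully is the handling of the degenerate pair $\{\emptyset,[n]\}$ (forcing $1\in A$ is ambiguous only there) and the verification that $\Ac$ and $\Ac'$ are disjoint, which hinges on the fact that in the parametrization $B$ is drawn from $\{2,\dots,n-1\}$ and so never contains $n$. Once those are noted, invoking Lemma~\ref{Lemma-generators} for the count of $2^{n-1}$ closes the proof.
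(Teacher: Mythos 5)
Your proof is correct, and it takes a genuinely different route from the paper's. Both arguments share the same opening reduction: by Lemma~\ref{Lemma-generators} the ideal $I(C_n)$ has exactly $2^{n-1}$ minimal generators, all of the same degree $|E(C_n)|$, so it suffices to show that the $2^{n-1}$ monomials $u_A$ with $A\in \Ac\union\Ac'$ are pairwise distinct. The paper establishes distinctness by noting that $t_n$ divides $u_A$ for $A\in\Ac$ while $s_n$ divides $u_A$ for $A\in\Ac'$, and then deferring the distinctness within each family to the recursion $I(C_n)=I(C_{n-1})t_n+I'(C_{n-1})s_n$ of Proposition~\ref{smallercycle} (a forward reference). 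You instead use the fact that for a connected graph the assignment $\{A,A^c\}\mapsto u_A$ is a bijection onto the set of minimal generators --- which does follow from Lemma~\ref{Lemma-generators}, since that map is a surjection between two sets of cardinality $2^{n-1}$ --- and then verify that $\Ac\union\Ac'$ is exactly the collection of subsets of $[n]$ containing $1$, hence a transversal of the pairs $\{A,A^c\}$. Your version is self-contained and avoids the forward reference, at the cost of leaning slightly harder on Lemma~\ref{Lemma-generators} than its bare statement (though the counting argument you need is immediate).

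One parenthetical remark in your write-up is false, though harmless. There is no ambiguity for the pair $\{\emptyset,[n]\}$: for every $A$ exactly one of $A$, $A^c$ contains $1$, so the normalization is always forced. Moreover, your identification of $u_{\emptyset}=u_{[n]}=t_1\cdots t_n$ with the representative $\{1\}\in\Ac'$ is wrong: for the cycle one has $u_{\{1\}}=s_1t_2\cdots t_{n-1}s_n\neq t_1\cdots t_n$. The correct representative of this class is $[n]=\{1,n\}\union\{2,\dots,n-1\}\in\Ac$, which your main case distinction ($1\in A$ and $n\in A$ implies $A\in\Ac$) already covers; the parenthetical should simply be deleted.
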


\begin{proof}
Since $|\Ac\union \Ac'|=2^{n-1}$  and since $I(C_n)$ is generated by precisely  $2^{n-1}$ elements by Lemma \ref{Lemma-generators}, it suffices  to show that the monomials $u_A$  in  $A\in \Ac\union \Ac'$  are pairwise distinct.

Note that $t_n$ divides $u_A$ if $A\in \Ac$, and $s_n$ divides $u_A$ if  $A\in \Ac'$. Hence it suffices to show that $|\{u_A\:\; A\in \Ac\}|=2^{n-2}$ and $|\{u_A\:\; A\in \Ac'\}|=2^{n-2}$. This is a consequence of Proposition~\ref{smallercycle} from below.
\end{proof}

\begin{Proposition}
\label{smallercycle}
With the notation introduced in Lemma~\ref{trytoproveit} we have
\[
\langle u_A\:\; A\in \Ac\rangle =I(C_{n-1})t_n \quad \text{and}
 \quad \langle u_A\:\; A\in \Ac'\rangle= I'(C_{n-1})s_n.
\]
Here $I'(C_{n-1})$ is obtained from $I(C_{n-1})$ by the substitution which exchanges $s_{n-1}$ and $t_{n-1}$ and keeps the other variables unchanged. In particular, $$I(C_n)=I(C_{n-1})t_n+I'(C_{n-1})s_n,$$ and $I(C_{n-1})$ and $I'(C_{n-1})$ have the same graded Betti numbers.
\end{Proposition}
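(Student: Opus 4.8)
The plan is to exploit that $C_{n-1}$ is obtained from $C_n$ by deleting the vertex $n$ together with its two incident edges $e_{n-1}=\{n-1,n\}$ and $e_n=\{1,n\}$, and then joining the endpoints $1$ and $n-1$ of the resulting path by a new edge. Write $\bar u_B$ for the cut monomial of $B\subseteq[n-1]$ with respect to $C_{n-1}$. For $A\in\Ac$ we have $1,n\in A$, so $e_n$ does not separate $A$ and $t_n\mid u_A$; set $A'=A\sect[n-1]$, so that $A\mapsto A'$ is a bijection from $\Ac$ onto $\{B\subseteq[n-1]:1\in B\}$. For $A\in\Ac'$ we have $1\in A$ and $n\notin A$, so $e_n$ separates $A$ and $s_n\mid u_A$; here $A\subseteq[n-1]$ and $A$ itself ranges over $\{B\subseteq[n-1]:1\in B\}$. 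I will show that $u_A=t_n\bar u_{A'}$ in the first case and $u_A=s_n\,\sigma(\bar u_A)$ in the second, where $\sigma$ denotes the substitution exchanging $s_{n-1}$ and $t_{n-1}$.

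The key step is to compare, for one such set $A$, the factors of $u_A$ coming from $e_1,\dots,e_{n-1}$ with the factors of $\bar u_{A\sect[n-1]}$. The edges $e_i=\{i,i+1\}$ with $i\le n-2$ are literally the same edges of $C_{n-1}$, with the same endpoint memberships, so the corresponding factors agree. Everything hinges on $e_{n-1}=\{n-1,n\}$ versus the wrap-around edge $\{1,n-1\}$ of $C_{n-1}$. If $A\in\Ac$ (so $n\in A$), then $e_{n-1}$ separates $A$ iff $n-1\notin A$; and since $1\in A'$, the edge $\{1,n-1\}$ separates $A'$ iff $n-1\notin A'$, which is the same condition, so the position-$(n-1)$ factor is the same in both and $u_A/t_n=\prod_{i=1}^{n-1}\alpha_i=\bar u_{A'}$. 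If $A\in\Ac'$ (so $n\notin A$), then $e_{n-1}$ separates $A$ iff $n-1\in A$, whereas $\{1,n-1\}$ separates $A$ iff $n-1\notin A$ — the opposite condition — so the position-$(n-1)$ factor is switched between $s_{n-1}$ and $t_{n-1}$, and since $\sigma$ fixes all factors at positions $\le n-2$ we get $u_A/s_n=\sigma(\bar u_A)$.

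It then remains to pass from monomials to ideals. As $A$ ranges over $\Ac$ (resp.\ $\Ac'$) the set $A'$ (resp.\ $A$) ranges over all subsets of $[n-1]$ containing $1$; since $\bar u_B=\bar u_{[n-1]\setminus B}$ and $1$ lies in exactly one of $B$ and $[n-1]\setminus B$, these cut monomials already generate $I(C_{n-1})$. Hence $\langle u_A:A\in\Ac\rangle=t_n\,I(C_{n-1})$ and $\langle u_A:A\in\Ac'\rangle=s_n\,\sigma(I(C_{n-1}))=s_n\,I'(C_{n-1})$. For the ``in particular'' part, every generator $u_A$ of $I(C_n)$ may be taken with $1\in A$ (as $u_A=u_{A^c}$), and then $A\in\Ac$ if $n\in A$ and $A\in\Ac'$ if $n\notin A$; adding the two halves gives $I(C_n)=I(C_{n-1})t_n+I'(C_{n-1})s_n$. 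Finally, $\sigma$ is a graded $K$-algebra automorphism of the polynomial ring carrying $I(C_{n-1})$ isomorphically onto $I'(C_{n-1})$, so it carries a minimal graded free resolution of one onto a minimal graded free resolution of the other, whence the two ideals have identical graded Betti numbers. The main obstacle is not conceptual but purely the careful edge bookkeeping at the two places where the cyclic structures of $C_n$ and $C_{n-1}$ disagree, that is, keeping straight that including $n$ in $A$ makes $e_{n-1}$ mimic the new edge $\{1,n-1\}$ of $C_{n-1}$ whereas excluding $n$ reverses its role (producing the $s_{n-1}\leftrightarrow t_{n-1}$ swap).
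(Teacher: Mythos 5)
Your proof is correct and takes essentially the same route as the paper: an edge-by-edge comparison of the cut monomials of $C_n$ and $C_{n-1}$, isolating position $n-1$ as the only factor that can change, with the sign of the change (identity versus the $s_{n-1}\leftrightarrow t_{n-1}$ swap) governed by whether $n\in A$. The only cosmetic difference is that you compare $u_A$ for $A\in\Ac'$ directly with the cut monomial of $A$ for $C_{n-1}$, whereas the paper first relates it to $u_{A\cup\{n\}}$ inside $C_n$; you also make explicit the small points (complementation symmetry generating all of $I(C_{n-1})$, and the automorphism argument for equal Betti numbers) that the paper leaves implicit.
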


\begin{proof}
We claim that for $A\in \Ac$ and $A'=A\setminus \{n\}$
we have $u_A=u_{A'}t_n$ and $u_{A'}$ is the cut monomial of $A'$ for $C_{n-1}$. Indeed, let $u_A=\alpha_1 \cdots \alpha_{n-1}\alpha_n$ with $\alpha_n=t_n$ be the cut monomial of $A$ for $C_{n}$
and let  $u_{A'}t_n=\alpha_1' \cdots \alpha_{n-1}'t_n$
be the cut monomial of $A'$ for $C_{n-1}$ multiplied by $t_n$. We have to show that  $\alpha_i=\alpha_i'$ for $i=1,\ldots,n-1$.

If $i=1,\ldots,n-2$,  then  the edge $e_i$ of $C_n$ belongs to $A$ or to $[n]\setminus A$, if and only if the edge $e_i$ of $C_{n-1}$ belongs to $A'$ or to $[n-1]\setminus A'$. This shows that $\alpha_i=\alpha_i'$ for $i=1,\ldots,n-2$.

Next consider $i=n-1$. Since $n\in A$,  it follows that $\alpha_{n-1}=t_{n-1}$ if $n-1\in A$ and $\alpha_{n-1}=s_{n-1}$ if $n-1\not \in A$. Moreover, since $1\in A'$,   we see  that $\alpha_{n-1}'=t_{n-1}$ if $n-1\in A'$ and $\alpha_{n-1}'=s_{n-1}$ if $n-1\not \in A'$. Note that $n-1\in A$ if and only $n-1\in A'$. Therefore, we also have that  $\alpha_{n-1}=\alpha_{n-1}'$.
This proves that
\[
I(C_{n-1})=
\langle u_{A'}\:\; A'=A\setminus \{n\} \text{ with } A\in \Ac\rangle.
\text{ and } \langle u_A\:\; A\in \Ac\rangle
=
I(C_{n-1})t_n.
\]
Observe  that the map $\Ac\to \Ac'$ with   $A\mapsto A'=A\setminus \{n\}$ establishes a  bijection between $\Ac$ and $\Ac'$. Now we consider  $u_{A'}$ as the cut monomial of $A'$ for $C_{n}$  (and not for $C_{n-1}$, as in the previous paragraph).  We claim that $u_{A'}$   is obtained from $u_A$ by exchanging $s_i$ and $t_i$ for $i=n-1,n$  and keeping the other variables unchanged. The desired equality
\[
\langle u_{A'}\:\; A'\in \Ac'\rangle= I'(C_{n-1})s_n
\]
follows then from this  claim. In order to prove it, we let
\[
u_A=\alpha_1\cdots \alpha_n \text{ and }
u_{A'}=\alpha_1'\cdots \alpha_n'.
\]
For $i=1,\ldots,n-2$ we have that the edge $e_i$ of $C_n$ belongs to $A$ or to $[n]\setminus A$ if and only if $e_i$ belongs $A'$ or to  $[n]\setminus A'$ . This shows that $\alpha_i=\alpha_i'$ for $i=1,\ldots,n-2$. Since $n\in A$, it follows that $\alpha_{n-1}=t_{n-1}$ if $n-1\in A$ and that $\alpha_{n-1} =s_{n-1}$, if $n-1\not \in A$. On the other hand, since $n\not\in A'$, it follows that $\alpha_{n-1}'=s_{n-1}$, if $n-1\in A'$ and that $\alpha_{n-1}' =t_{n-1}$ if $n-1\not \in A'$. Hence,  $s_{n-1}$ and $t_{n-1}$ are exchanged when passing from $\alpha_{n-1}$ to $\alpha_{n-1}'$. In the same way one sees $s_{n}$ and $t_{n}$ are exchanged when passing from $\alpha_{n}$ to $\alpha_{n}'$.
\end{proof}

Let $I\subseteq [n]$. We set $s_I=\prod_{i\in I}s_i$. Similarly, $t_J$ is defined for $J\subseteq [n]$.

\begin{Corollary}
\label{generators}
The cut ideal $I(C_n)$ is minimally generated by the  monomials $s_It_J$ satisfying:
\begin{enumerate}
    \item[(1)] $|J|\equiv n\mod 2$,
    \item[(2)] $I\union J=[n]$,
    \item[(3)] $I\sect J=\emptyset$.
\end{enumerate}
\end{Corollary}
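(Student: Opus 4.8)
The plan is to verify the three conditions directly against the recursive description of $I(C_n)$ obtained in Proposition~\ref{smallercycle}, using induction on $n$. Recall that $I(C_n)=I(C_{n-1})t_n+I'(C_{n-1})s_n$, where $I'(C_{n-1})$ is $I(C_{n-1})$ with $s_{n-1}$ and $t_{n-1}$ swapped. First I would fix the base case $n=3$ (or $n=2$ if one allows the digon; since the paper insists $|E(G)|\geq 1$ and cycles start at $C_3$ in the standard setting, I would take $n=3$) and simply list the $2^{n-1}=4$ generators: for $C_3$ the cut monomials are $t_1t_2t_3$ (from $A=\{1\}\cup\emptyset$? no—let me instead just check that the four monomials $s_It_J$ with $I\sqcup J=[3]$ and $|J|$ odd, namely $s_1s_2s_3$ with $|J|=0$—wait $0\not\equiv 3$), so the admissible ones are $|J|\in\{1,3\}$: $\{t_1t_2t_3\}$ and the three with $|J|=1$: $s_2s_3t_1,\ s_1s_3t_2,\ s_1s_2t_3$. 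These are exactly the cut monomials $u_A$ for $A\in\{[3],\{1\},\{2\},\{3\}\}$, matching Lemma~\ref{Lemma-generators}.

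For the inductive step I would take a generator $u$ of $I(C_n)$. By Proposition~\ref{smallercycle} either $t_n\mid u$ and $u=u't_n$ with $u'\in G(I(C_{n-1}))$, or $s_n\mid u$ and $u=u''s_n$ with $u''\in G(I'(C_{n-1}))$. In the first case, the induction hypothesis applied to $I(C_{n-1})$ gives $u'=s_It_J$ with $I\sqcup J=[n-1]$ and $|J|\equiv n-1\pmod 2$; then $u=s_It_{J\cup\{n\}}$ has index sets $I$ and $J\cup\{n\}$ partitioning $[n]$ with $|J\cup\{n\}|=|J|+1\equiv n\pmod 2$, so all three conditions hold. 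In the second case, applying the hypothesis to $I(C_{n-1})$ and then swapping $s_{n-1}\leftrightarrow t_{n-1}$: if $s_I t_J$ is a generator of $I(C_{n-1})$ with $|J|\equiv n-1$, then after the swap the generator of $I'(C_{n-1})$ is $s_{I'}t_{J'}$ where $(I',J')$ is obtained from $(I,J)$ by moving $n-1$ from whichever set it lies in to the other—so still $I'\sqcup J'=[n-1]$, but now $|J'|=|J|\pm1\equiv n\pmod 2$. Wait, that parity shift is wrong unless $n-1$ always changes which set it is in; since every generator of $I(C_{n-1})$ contains exactly one of $s_{n-1},t_{n-1}$, the swap does move $n-1$ between $I$ and $J$, so indeed $|J'|\equiv |J|+1\equiv n\pmod 2$. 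Then $u=s_{I'}t_{J'}s_n=s_{I'\cup\{n\}}t_{J'}$ with $I'\cup\{n\}$ and $J'$ partitioning $[n]$ and $|J'|\equiv n\pmod 2$. So in both cases $u$ has the claimed form.

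Conversely I would count: the number of pairs $(I,J)$ with $I\sqcup J=[n]$ and $|J|\equiv n\pmod 2$ is $\sum_{k\equiv n (2)}\binom{n}{k}=2^{n-1}$, which by Lemma~\ref{Lemma-generators} is exactly the number of minimal generators of $I(C_n)$; combined with the forward inclusion just established, the two sets coincide and each such monomial is genuinely a minimal generator. The main obstacle is purely bookkeeping: keeping straight, in the $s_n$-branch, that conjugating by the $s_{n-1}\leftrightarrow t_{n-1}$ swap flips the parity of $|J|$ in exactly the right way so that the congruence class changes from $n-1$ to $n$; everything else is routine. An alternative, perhaps cleaner, route avoiding the recursion is to compute $u_A$ for a general $A\ni 1$ directly: the edge $e_i=\{i,i+1\}$ contributes $s_i$ iff exactly one of $i,i+1$ lies in $A$, i.e. iff $\chi^A$ changes value across $e_i$; going once around the cycle the value of the indicator must return to its start, so the number of edges where it changes—that is $|I|$—has the same parity as... hmm, actually $|I|$ is even always, which forces $|J|=n-|I|\equiv n\pmod2$, giving condition (1) immediately, while conditions (2) and (3) are automatic since each $e_i$ contributes exactly one of $s_i,t_i$. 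I would likely present this second argument as the proof, as it is shorter and self-contained, and keep the inductive one as a remark.
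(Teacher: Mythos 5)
Your proposal is correct, and the argument you say you would actually present (the direct one) is genuinely different from, and cleaner than, the paper's proof. The paper proves the corollary by induction on $n$ using the recursion $I(C_n)=I(C_{n-1})t_n+I'(C_{n-1})s_n$ of Proposition~\ref{smallercycle}, checking the base cases $n=3,4$ by CoCoA or direct computation, and tracking the parity of the $t$-degree through the $s_{n-1}\leftrightarrow t_{n-1}$ substitution --- this is exactly your inductive variant, and your bookkeeping there is in fact slightly more careful than the paper's (the substitution changes the $t$-degree by $0$ or $-2$, so it preserves only the parity, which is what is needed; your observation that the swap moves $n-1$ between $I$ and $J$ and hence shifts $|J|$ by $\pm 1$ is the right way to see it). Your preferred route instead reads condition (1) off directly from the combinatorics: traversing the cycle, the indicator function of $A$ must return to its initial value, so the number of cut edges $|I|$ is even and $|J|=n-|I|\equiv n\pmod 2$; conditions (2) and (3) are automatic from the definition of $u_A$. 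This avoids both the recursion and the computer-verified base cases, at the cost of nothing. Both routes still require the counting step --- $\sum_{k\equiv n\,(2)}\binom{n}{k}=2^{n-1}$ candidate monomials versus the $2^{n-1}$ distinct minimal generators guaranteed by Lemma~\ref{Lemma-generators} --- to conclude that the inclusion of the generating set into the candidate set is an equality; you make this explicit, whereas the paper leaves it implicit.
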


\begin{proof}
We prove the assertion by induction on $n\geq 3$. For $n=3$ and $n=4$ it can be checked by CoCoA \cite{Co} or by a direct (tedious) calculation.
For all $n$ the conditions (2) and (3) are obvious. Now we check (1) and assume that $n\geq 5$ and $n$ is odd. We use the identity
\[
I(C_n)=I(C_{n-1})t_n+I'(C_{n-1})s_n
\]
shown in Proposition~\ref{smallercycle}.

Now let $n\geq 5$ and let $u$ be a minimal monomial generator  of $I(C_n)$. First assume that $u\in I(C_{n-1})t_n$. Then $u=vt_n$ and $v$ is a minimal monomial generator of $I(C_{n-1})$. By induction hypothesis,  the $t$-degree of $v$ is even. Thus, the $t$-degree of $u$ is odd.  Next assume that $u\in I'(C_{n-1})s_n$. Then there exist  $w\in I(C_{n-1})t_n$ such that $u$ is obtained from $w$ by the substitution which exchanges $s_{n-1}$ and $t_{n-1}$ and replaces  $t_{n}$ by $s_{n-1}$  and keeps the other variables unchanged. This substitution does not change the $t$-degree. This proves the corollary when $n$  is odd. The same arguments works when $n$ is even.
\end{proof}

For the Betti number of $I(C_n)$ we consider the following exact sequence
\begin{eqnarray}
\label{shortexact}
\hspace{1cm} 0\to I(C_{n-1})t_n\sect I'(C_{n-1})s_n\to
I(C_{n-1})t_n\dirsum I'(C_{n-1})s_n\to I(C_n)\to 0.
\end{eqnarray}
We set
\[
L(C_n)= I(C_{n-1})\sect I'(C_{n-1}).
\]
Then
$
I(C_{n-1})t_n\sect I'(C_{n-1})s_n=L(C_n)s_nt_n.
$
\begin{Lemma}
\label{intersection}
The ideal $L(C_n)$ is minimally generated by the monomials $s_It_J$ satisfying:
\begin{enumerate}
\item[(1)] $|I\union J|=n$,
\item[(2)] $|I\sect J|=1$.
\end{enumerate}
\end{Lemma}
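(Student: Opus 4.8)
The plan is to reduce everything to deciding, for $K,L\subseteq[n-1]$, when the squarefree monomial $s_Kt_L$ lies in $L(C_n)=I(C_{n-1})\sect I'(C_{n-1})$. Since $I(C_{n-1})$ and $I'(C_{n-1})$ are squarefree monomial ideals, so is $L(C_n)$, hence it is determined by which squarefree monomials it contains. First I would record the generators of the two ideals. By Corollary~\ref{generators} applied with $n-1$ in place of $n$, the minimal generators of $I(C_{n-1})$ are the monomials $s_{I_0}t_{[n-1]\setminus I_0}$ with $|I_0|$ even, where $I_0\subseteq[n-1]$; and since the substitution exchanging $s_{n-1}$ and $t_{n-1}$ changes $|I_0|$ by exactly one, the minimal generators of $I'(C_{n-1})$ are the monomials $s_{I_0}t_{[n-1]\setminus I_0}$ with $|I_0|$ odd. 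Note that the two parity conditions are complementary.

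The core step is the claim: for $K,L\subseteq[n-1]$, one has $s_Kt_L\in I(C_{n-1})$ if and only if $K\union L=[n-1]$ and either $K\sect L\neq\emptyset$ or $|K|$ is even, and likewise for $I'(C_{n-1})$ with ``even'' replaced by ``odd''. To prove this, observe that $s_Kt_L$ is divisible by the generator $s_{I_0}t_{[n-1]\setminus I_0}$ exactly when $[n-1]\setminus L\subseteq I_0\subseteq K$; such an $I_0$ can exist only if $K\union L=[n-1]$, and then $I_0$ is obtained from the fixed set $[n-1]\setminus L$ by adjoining an arbitrary subset of $K\sect L$, so $|I_0|$ runs over a block of $|K\sect L|+1$ consecutive integers. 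Hence if $|K\sect L|\geq1$ both parity classes are attained, so $s_Kt_L$ lies in $I(C_{n-1})$ and in $I'(C_{n-1})$; while if $K\sect L=\emptyset$ then $L=[n-1]\setminus K$, the choice $I_0=K$ is forced, and $s_Kt_L$ lies in exactly one of the two ideals according to the parity of $|K|$. Intersecting the two descriptions, $s_Kt_L\in L(C_n)$ precisely when $K\union L=[n-1]$ and $K\sect L\neq\emptyset$, so $L(C_n)$ is generated by all monomials $s_Kt_L$ of this kind.

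It then remains to single out the minimal generators among the monomials $s_Kt_L$ with $K\union L=[n-1]$ and $K\sect L\neq\emptyset$. If $|K\sect L|=d\geq2$, fix $x\in K\sect L$, delete from $K$ the remaining $d-1$ elements of $K\sect L$, and keep $L$ unchanged; the new pair $(K',L)$ still satisfies $K'\union L=[n-1]$, now with $K'\sect L=\{x\}$, so $s_{K'}t_L$ again belongs to $L(C_n)$ and properly divides $s_Kt_L$. Thus every minimal generator has $|K\sect L|=1$. Conversely, any two distinct such monomials $s_{K_1}t_{L_1}$ and $s_{K_2}t_{L_2}$ with $K_i\union L_i=[n-1]$ and $|K_i\sect L_i|=1$ have the same total degree $n$, so neither divides the other; hence all of them are minimal generators. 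This is exactly the list in the statement.

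The step I expect to require the actual care is the combinatorial observation inside the membership criterion: that the cardinalities $|I_0|$ of the sets $I_0$ with $[n-1]\setminus L\subseteq I_0\subseteq K$ fill out an entire integer interval, so that whenever $K\sect L\neq\emptyset$ both an even and an odd value occur, and that the parity conditions defining the generators of $I(C_{n-1})$ and of $I'(C_{n-1})$ are genuinely complementary. Once these two points are in place, the passage to the intersection and the extraction of the minimal generators are routine bookkeeping.
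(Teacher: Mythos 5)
Your argument is correct, and it is a clean, genuinely different organization of the proof from the one in the paper. The paper also rests on Corollary~\ref{generators}, but it argues through $I(C_n)$: for the inclusion it constructs, for each candidate $s_It_J$, two explicit minimal generators $u_1,u_2$ of $I(C_n)$ (one in $I(C_{n-1})t_n$, one in $I'(C_{n-1})s_n$) with $\lcm(u_1,u_2)=s_It_Js_nt_n$, and for the converse it analyzes $\lcm$'s of arbitrary pairs of generators via the set $D$ of positions where they differ. Your route stays entirely inside $I(C_{n-1})$ and $I'(C_{n-1})$: the observation that the divisors $s_{I_0}t_{[n-1]\setminus I_0}$ of $s_Kt_L$ are parametrized by $[n-1]\setminus L\subseteq I_0\subseteq K$, so that $|I_0|$ sweeps an interval of length $|K\sect L|+1$ and hits both parities exactly when $K\sect L\neq\emptyset$, gives the full membership criterion for squarefree monomials in one stroke; the paper's witnesses $u_1,u_2$ are in effect the two extreme parity choices of $I_0$. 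What your approach buys is a complete description of \emph{all} squarefree monomials of $L(C_n)$, from which minimality falls out by a degree count (every squarefree monomial in the ideal has degree $|K\union L|+|K\sect L|\geq n$, so the degree-$n$ ones are exactly the minimal generators --- this closes the small gap where you only compare the degree-$n$ monomials among themselves). One point you should have flagged explicitly: condition (1) of the statement, $|I\union J|=n$, cannot hold literally, since $L(C_n)$ lives in the variables indexed by $[n-1]$; it is a typo for $I\union J=[n-1]$ (equivalently $|I\union J|=n-1$), as confirmed by the parametrization $v_{I,i}$ used in Theorem~\ref{linearquotients}, by the count $\beta_0(L(C_n))=(n-1)2^{n-2}$, and by the degree $(n+2)$ of the generators of $L(C_n)s_nt_n$ used in Theorem~\ref{final}. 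Your proof establishes exactly this corrected statement.
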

\begin{proof}
Let $s_It_J$ be a monomial satisfying (1) and (2). At first we  show that $s_It_J\in L(C_n)$. Let $I\sect J=\{i\}$. Suppose $n$ and $|J|$ are odd. Then let $u_1= (s_I/s_i)t_Js_n$ and $u_2=s_I(t_J/t_i)t_n$. By Corollary~\ref{generators}, $u_1,u_2\in I(C_n)$ are minimal monomial generators and by Proposition \ref{smallercycle}
$u_1/s_n \in I'(C_{n-1})$ and $u_2/t_n\in I(C_{n-1})$.
Recall that if $A$ and $B$ are monomial ideals, then $A\sect B$ is generated by the monomials $\lcm(u,v)$ with $u\in A$ and $v\in B$. In our case,
$\lcm(u_1,u_2)=s_It_J s_nt_n$, which  shows that
$s_It_J s_nt_n\in L(C_n)s_nt_n$ and thus $s_It_J\in L(C_n)$ is a generator. Next suppose that $n$ is odd, but $|J|$ is even. Then we let $u_1=(s_I/s_i)t_Jt_n$ and $u_2=s_I(t_J/t_i)s_n$. Again, $\lcm(u_1,u_2)=s_It_J s_nt_n$ and $u_1,u_2\in I(C_n)$ which shows that also in this case $s_It_J\in L(C_n)$. A similar argument shows the case if $n$ is even.

Conversely, let $u_1=\alpha_1\cdots \alpha_{n-1}s_n$ and $u_2=\alpha_1'\cdots \alpha_{n-1}'t_n$ be generators of $I(C_n)$. We want  to show that $\lcm(u_1,u_2)$ is divisible by $s_{I}t_{J}s_nt_n$ for some $I$ and $J$ with $|I\union J|=n$ and $|I\sect J|=1$.

We first show that there exists an integer $i$ such that $\alpha_i\neq \alpha_i'$. Indeed, suppose $\alpha_i=\alpha_i'$ for $i=1,\ldots,n-1$. Then,  if the $t$-degree of $u_1$ is even (odd), then the $t$-degree of $u_2$ is odd (even). This contradicts Corollary~\ref{generators}.

Let $D$ be the set of integers $i$ with $\alpha_i\neq\alpha_i'$. Then $D\neq \emptyset$  and $\deg(\lcm(u_1,u_2))=(n-1)+|D|$.  We choose $j\in D$, and let $u=\lcm(u_1,u_2)/v$, where $v=\prod_{i\in D, i\neq j}s_i$. Then $u$ is a monomial satisfying (1) and (2), and $\lcm(u_1,u_2)= vu$.
\end{proof}

For the next result we introduce the following notation. Let $s_It_J$ be a generator of $L(C_n)$. We set
\[
v_{I,i}=s_It_J, \quad  \text{where}\quad  I\sect J=\{i\}.
\]
Note  that $J=I^c\union\{i\}$. Here $I^c=[n-1]\setminus I$.

\begin{Theorem}
\label{linearquotients}
The ideal $L(C_n)$ has linear quotients with respect to the following order of the generators of $L(C_n)$: We have $v_{L,\ell}<v_{I,i}$ if and only if
\begin{enumerate}
    \item[(i)]  $|L|>|I|$, or
    \item[(ii)] $|L|=|I|$ and $I>L$ with respect to the lexicographic order, or
    \item[(iii)]  $L=I$ and $\ell<i$.
\end{enumerate}
Let $L_{I,i}$ be the ideal generated by  $v_{L,\ell}$ with  $v_{L,\ell}<v_{I,i}$. Then
\[
L_{I,i}:v_{I,i}= \langle \{t_k\:\; k\in I, k<i\}\union\{s_k\:\; k\in [n-1]\setminus I\}\rangle.
\]
In particular,  $L(C_n)$  has a linear resolution.
\end{Theorem}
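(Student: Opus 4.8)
The plan is to verify that the stated total order on the minimal generators of $L(C_n)$ realizes linear quotients, with the colon ideals being exactly the ideals of variables displayed in the theorem, and then to invoke the standard fact that an ideal generated in a single degree and having linear quotients has a linear resolution. As a preliminary step I would fix the bookkeeping. Writing $I^c=[n-1]\setminus I$, the generator attached to a pair $(I,i)$ with $i\in I\subseteq[n-1]$ is $v_{I,i}=s_I\,t_{I^c\union\{i\}}$, and it has degree $|I|+(|I^c|+1)=n$; hence $L(C_n)$ is generated in the single degree $n$. For any two generators the colon monomial is
\[
\frac{\lcm(v_{L,\ell},v_{I,i})}{v_{I,i}}=\frac{v_{L,\ell}}{\gcd(v_{L,\ell},v_{I,i})}=s_{L\setminus I}\,t_{M\setminus J},
\]
where $M=L^c\union\{\ell\}$ and $J=I^c\union\{i\}$; and $L_{I,i}:v_{I,i}$ is generated by these monomials as $v_{L,\ell}$ runs over the generators preceding $v_{I,i}$. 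The whole argument is then an analysis of this monomial under the three clauses (i)--(iii) defining the order.

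For the inclusion of the displayed ideal into $L_{I,i}:v_{I,i}$, I would exhibit, for each listed variable, a preceding generator whose colon monomial is exactly that variable. Given $k\in I$ with $k<i$, the generator $v_{I,k}$ precedes $v_{I,i}$ by clause (iii); since $\gcd(v_{I,k},v_{I,i})=s_I\,t_{I^c}$, its colon monomial is $t_k$. Given $k\in I^c$, the generator $v_{I\union\{k\},k}=s_{I\union\{k\}}\,t_{I^c}$ precedes $v_{I,i}$ by clause (i), because it has strictly larger $s$-support; since $\gcd(v_{I\union\{k\},k},v_{I,i})=s_I\,t_{I^c}$, its colon monomial is $s_k$. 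Thus every variable on the right-hand side of the claimed identity lies in $L_{I,i}:v_{I,i}$.

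For the reverse inclusion I would take an arbitrary preceding generator $v_{L,\ell}<v_{I,i}$ and show that its colon monomial $s_{L\setminus I}\,t_{M\setminus J}$ is divisible by one of the listed variables. If $L\not\subseteq I$, choose $k\in L\setminus I$; then $k\in I^c$ and $s_k\mid s_{L\setminus I}$, which finishes this case. If $L\subseteq I$, then neither clause (i) (which requires $|L|>|I|$) nor clause (ii) (which requires $|L|=|I|$ but $L\neq I$) can hold, so clause (iii) must, whence $L=I$ and $\ell<i$; as in the previous paragraph the colon monomial is then $t_\ell$ with $\ell\in I$ and $\ell<i$. This establishes the displayed formula for $L_{I,i}:v_{I,i}$, so $L(C_n)$ has linear quotients; being generated in the single degree $n$, it has a linear resolution.

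The step I expect to require the most care is the set-theoretic bookkeeping in the colon monomial $s_{L\setminus I}\,t_{M\setminus J}$: one has to check that in the two \emph{model} situations used above ($L=I$ with a different second index, and $L=I\union\{k\}$ with $k\in I^c$) the greatest common divisor is exactly $s_I\,t_{I^c}$, so that the colon monomial collapses to a single variable rather than to something larger, and one has to confirm that clause (i) — giving priority to larger $s$-support — is precisely what places $v_{I\union\{k\},k}$ before $v_{I,i}$. The remaining verifications are routine case distinctions on the order.
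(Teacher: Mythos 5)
Your proof is correct and follows essentially the same strategy as the paper: both inclusions of the colon-ideal identity are checked with the same explicit witnesses ($v_{I,k}$ for the variables $t_k$ and a generator with $s$-support $I\cup\{k\}$ for the variables $s_k$), followed by a case analysis on the order for the reverse inclusion. Your only real deviation is collapsing the paper's first two cases into the single dichotomy $L\not\subseteq I$ versus $L\subseteq I$, which is a clean streamlining that correctly produces a witness in $L\setminus I$ (the paper's Case~2 as written points to an element of $I\setminus L$, a slip your version avoids), and your explicit observation that $L(C_n)$ is equigenerated in degree $n$ properly justifies the concluding claim about the linear resolution.
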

We use the following notation: let $u,v$ be monomials. Then we set $u:v= \lcm(u,v)/v$. With this notation introduced we have that if $M$ is a monomial ideal with monomial generators $u_1,\ldots,u_m$, then for any monomial $v$ we have $M:v=(u_1:v,\ldots,u_m:v)$.

\begin{proof}
Let
\[
D:=\langle \{t_k\:\; k\in I, k<i\}\union\{s_k\:\; k\in [n-1]\setminus I\}\rangle.
\]
We first show that
\[
D\subseteq L_{I,i}:v_{I,i}.
\]
So let $t_k\in D$. Note  that $v_{I,k}:v_{I,i}=t_k$ and that $v_{I,k}<v_{I,i}$. This implies that $t_k\in L_{I,i}:v_{I,i}$. Next let $s_k\in D$. We set  $I'=I\union\{k\}$. Then $v_{I', i}:v_{I,i}=s_k$ and $v_{I', i}<v_{I,i}$. Hence,  $s_k\in L_{I,i}:v_{I,i}$.

Conversely, consider $v_{I,i}$ and $v_{L,\ell}$ with $v_{L,\ell}<v_{I,i}$. We distinguish the following three cases.

Case 1: $|L|>|I|$. Then the exists $j\in L\not\in I$. Thus, $s_j|(v_{L, \ell}:v_{I,i})$ and $s_j\in D$. This shows that $v_{L, \ell}:v_{I,i}\in D$.

Case 2: $|I|=|L|$ and $I>L$ with respect to the lexicographic order. Let $I=\{i_1<i_2<\ldots <i_k\}$ and let $L=\{\ell_1<\ell_2<\ldots<\ell_k\}$. Since $I>L$, there exists an integer $r$ such that $i_j=l_j$ for $j<r$ and $i_r<\ell_r$. Since $\ell_{r-1}= i_{r-1}<i_r$, it follows that $i_r\not\in L$. Thus, $s_{i_r}\in D$ and $s_{i_r}|(v_{L, \ell}:v_{I,i})$. Hence,  $v_{L, \ell}:v_{I,i}\in D$.

Case 3:  $L=I$ and $i<\ell$. In this case $v_{L,\ell}:v_{I,i}={t_\ell}$ which is an element in $D$.

In conclusion,  $L_{I,i}:v_{I,i}\subseteq D$. This concludes the proof.
 \end{proof}

\begin{Corollary}
\label{bettinoteasy}
For each $I\subseteq [n-1]$ with $I\neq \emptyset$ and $i\in I$ we set
\[
r_{I,i}=|\{k\:\; k\in I, k<i\}|+|\{k\:\; k\in [n-1]\setminus I\}|.
\]
Then
\[
\beta_j(L(C_n))=\sum_{\emptyset \neq I\subseteq [n-1]}\sum_{i\in I}{\binom{r_{I,i}}{j}}.
\]
In particular, $\beta_0(L(C_n))=(n-1)2^{n-2}$.
\end{Corollary}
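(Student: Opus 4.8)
The plan is to obtain the total Betti numbers of $L(C_n)$ directly from the linear quotients established in Theorem~\ref{linearquotients}, and then to specialize the resulting formula to $j=0$.

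The one external ingredient I would invoke is the standard homological consequence of linear quotients (the mapping cone construction of Herzog and Takayama, as presented in the monograph of Herzog and Hibi on monomial ideals): if a monomial ideal $M$ has linear quotients with respect to an ordering $g_1,\dots,g_N$ of its minimal monomial generators, and $q_k$ denotes the number of variables generating the colon ideal $\langle g_1,\dots,g_{k-1}\rangle:g_k$, then the iterated mapping cone along this order is a minimal graded free resolution of $M$, whence
\[
\beta_j(M)=\sum_{k=1}^{N}\binom{q_k}{j}\qquad\text{for all }j\geq 0.
\]
By Theorem~\ref{linearquotients} this applies to $M=L(C_n)$, ordered by the rule given there for the generators $v_{I,i}$.

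It then remains to identify the numbers $q_k$. For the generator $v_{I,i}$, where $\emptyset\neq I\subseteq[n-1]$ and $i\in I$, Theorem~\ref{linearquotients} gives
\[
L_{I,i}:v_{I,i}=\langle\{t_k\:\; k\in I,\ k<i\}\union\{s_k\:\; k\in[n-1]\setminus I\}\rangle .
\]
The elements listed are pairwise distinct variables --- distinct within each of the two families since their indices differ, and distinct between the families since an $s$-variable is never a $t$-variable --- so they constitute the minimal generating set, and their number is exactly $|\{k\in I\:\; k<i\}|+|[n-1]\setminus I|=r_{I,i}$. Substituting $q_k=r_{I,i}$ into the displayed formula yields
\[
\beta_j(L(C_n))=\sum_{\emptyset\neq I\subseteq[n-1]}\ \sum_{i\in I}\binom{r_{I,i}}{j},
\]
which is the first claim.

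For $j=0$ one has $\binom{r}{0}=1$ for every integer $r\geq 0$, so the sum collapses to $\beta_0(L(C_n))=\sum_{\emptyset\neq I\subseteq[n-1]}|I|=\sum_{I\subseteq[n-1]}|I|$. Counting the pairs $(I,i)$ with $i\in I\subseteq[n-1]$ according to $i$, each fixed $i\in[n-1]$ occurs together with exactly the $2^{n-2}$ subsets of $[n-1]$ containing it, so the total equals $(n-1)\,2^{n-2}$. I do not expect any real obstacle here: once Theorem~\ref{linearquotients} is granted, everything is bookkeeping. The only points that deserve an explicit word are the (trivial) verification that the colon ideal $L_{I,i}:v_{I,i}$ is minimally generated by precisely $r_{I,i}$ variables --- immediate because $s$- and $t$-variables are distinct, so no listed generator coincides with another --- and a brief reminder of why the iterated mapping cone in the cited statement is minimal, namely that all the colon ideals are generated in degree one.
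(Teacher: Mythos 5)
Your proof is correct and follows essentially the same route as the paper: both deduce the formula for $\beta_j(L(C_n))$ from Theorem~\ref{linearquotients} together with the standard Betti number formula for ideals with linear quotients (Corollary 8.2.2 in Herzog--Hibi), identifying the number of variables in the colon ideal with $r_{I,i}$. The only (immaterial) difference is in evaluating $\beta_0=\sum_{I\subseteq[n-1]}|I|$: you count pairs $(I,i)$ directly, while the paper computes $\sum_{i=1}^{n-1}i\binom{n-1}{i}$ by differentiating $(1+t)^{n-1}$.
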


\begin{proof}
The formula for $\beta_j(L(C_n))$ is an immediate consequence of Theorem~\ref{linearquotients} and \cite[Corollary 8.2.2]{HH}. The very explicit formula for $\beta_0(L(C_n))$ can be seen as follows: first notice that  $\beta_0(L(C_n))=\sum_{i=1}^{n-1}i{\binom{n-1}{i}}$. Now consider
\[
f(t)=(1+t)^{n-1}=\sum_{i=0}^{n-1}{\binom{n-1}{i}}t^i.
\]
Then $f'(t)=(n-1)(1+t)^{n-2}=\sum_{i=i}^{n-1}i{\binom{n-1}{i}}t^{i-1}$. Substituting $t$ by $1$ yields the desired formula.
\end{proof}

\begin{Theorem}
\label{final}
We denote by $\beta_i^r$ the $i$th Betti number of $I(C_r)$ and by $\lambda_i^r$ the $i$th Betti number of $L(C_r)$. Then for each $n \geq  4$ we have
\[
\beta_i^n =2\beta_i^{n-1}+\lambda_{i-1}^n \quad \text{for all}  \quad i\geq 0.
\]
\end{Theorem}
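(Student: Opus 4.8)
The plan is to run the long exact sequence in $\Tor(-,K)$ attached to the short exact sequence \eqref{shortexact},
\[
0\to L(C_n)s_nt_n\to I(C_{n-1})t_n\dirsum I'(C_{n-1})s_n\to I(C_n)\to 0,
\]
and to show that it degenerates into short exact sequences
\[
0\to \Tor_i\bigl(I(C_{n-1})t_n\dirsum I'(C_{n-1})s_n,K\bigr)\to\Tor_i\bigl(I(C_n),K\bigr)\to\Tor_{i-1}\bigl(L(C_n)s_nt_n,K\bigr)\to 0
\]
for every $i\geq 0$. Granting this, the recursion follows immediately by taking $K$-dimensions: the isomorphisms $I(C_{n-1})t_n\iso I(C_{n-1})(-1)$ and $I'(C_{n-1})s_n\iso I'(C_{n-1})(-1)$ together with Proposition~\ref{smallercycle} (which says that $I(C_{n-1})$ and $I'(C_{n-1})$ have the same graded Betti numbers) show that the middle term contributes $2\beta_i^{n-1}$, and $L(C_n)s_nt_n\iso L(C_n)(-2)$ shows that the last term contributes $\lambda_{i-1}^n$.

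So everything comes down to the degeneration, that is, to the vanishing of the maps $\Tor_i\bigl(L(C_n)s_nt_n,K\bigr)\to\Tor_i\bigl(I(C_{n-1})t_n\dirsum I'(C_{n-1})s_n,K\bigr)$ for all $i$. These are degree-preserving maps of graded vector spaces, so it is enough to see that in each internal degree at most one side is nonzero. By Lemma~\ref{intersection} the ideal $L(C_n)$ is generated in degree $n$, hence $L(C_n)s_nt_n$ is generated in degree $n+2$ and $\Tor_i\bigl(L(C_n)s_nt_n,K\bigr)_j=0$ for $j<n+2+i$. On the other hand $\reg\bigl(I(C_{n-1})t_n\dirsum I'(C_{n-1})s_n\bigr)=\reg I(C_{n-1})+1$, so $\Tor_i\bigl(I(C_{n-1})t_n\dirsum I'(C_{n-1})s_n,K\bigr)_j=0$ for $j>\reg I(C_{n-1})+1+i$. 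These two ranges are disjoint provided $\reg I(C_{n-1})\leq n$.

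It therefore remains to prove the a priori estimate $\reg I(C_m)\leq m+1$ for all $m\geq 3$, which I would do by induction on $m$. For $m=3$ one has $\reg I(C_3)=4$ (e.g.\ from $P_{I(C_3)}(x,y)=3x^2y^6+6xy^5+4y^3$ in Section~\ref{Section-hom-properties}, or by a direct computation). For the inductive step, apply to the sequence \eqref{shortexact} for $C_m$ the standard estimate $\reg Z\leq\max\{\reg Y,\reg X-1\}$ for a short exact sequence $0\to X\to Y\to Z\to 0$: here $\reg\bigl(I(C_{m-1})t_m\dirsum I'(C_{m-1})s_m\bigr)=\reg I(C_{m-1})+1\leq m+1$ by the induction hypothesis and Proposition~\ref{smallercycle}, while $\reg\bigl(L(C_m)s_mt_m\bigr)=\reg L(C_m)+2=m+2$, since by Theorem~\ref{linearquotients} the ideal $L(C_m)$ has a linear resolution and, by Lemma~\ref{intersection}, it is generated in degree $m$; hence $\reg I(C_m)\leq\max\{m+1,m+1\}=m+1$. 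The crux of the whole argument is exactly this regularity bound, as it is what makes the long exact sequence split; the only point to be careful about is that it must be bootstrapped from the same sequence \eqref{shortexact}, but it uses only the (elementary) regularity consequence of that sequence together with the linearity of the resolution of $L(C_m)$ from Theorem~\ref{linearquotients}. So one establishes $\reg I(C_m)\leq m+1$ for all $m$ first, and only afterwards derives the Betti recursion, with no circularity.
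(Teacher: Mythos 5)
Your proof is correct, and its skeleton coincides with the paper's: both start from the short exact sequence \eqref{shortexact}, pass to the long exact sequence in $\Tor(-,K)$, and force it to break into short exact sequences by showing that the degree-preserving maps $\Tor_i(L(C_n)s_nt_n,K)\to \Tor_i\bigl(I(C_{n-1})t_n\dirsum I'(C_{n-1})s_n,K\bigr)$ vanish because source and target live in disjoint internal degrees. The difference lies in how the degrees of the target are controlled. The paper invokes \cite[Corollary 3.8]{O}, which pins down exactly where $\Tor_i(I(C_{n-1}))$ is concentrated, so that the target sits in degree $(n+1)+i$ while the source sits in degree $(n+2)+i$. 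You instead prove the upper bound $\reg I(C_m)\leq m+1$ internally, by induction on $m$ via the standard estimate $\reg Z\leq\max\{\reg Y,\reg X-1\}$ applied to the same sequence, using only the linearity of the resolution of $L(C_m)$ from Theorem~\ref{linearquotients} and the degree of its generators from Lemma~\ref{intersection} --- and an upper bound on the target's degrees is indeed all that the vanishing argument needs. Your version is therefore self-contained (no appeal to \cite{O}) at the cost of an extra induction, and you are right that there is no circularity, since the regularity bound uses only the long exact sequence itself, not its degeneration. A small bonus of your route is that it handles $i=0$ by the same degree reasoning, whereas the paper treats $i=0$ separately by counting minimal generators.
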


\begin{proof}
Let $M$ be a finitely generated graded $S$-module.  For simplicity we set $T_i(M)$ for $\Tor_i^S(K,M)$. The short exact sequence \eqref{shortexact} gives rise to the long exact sequence
\begin{eqnarray*}
\label{longexact}
 \cdots \to T_i(I(C_{n-1})t_n)\dirsum  T_i(I'(C_{n-1})s_n) \to T_i(I(C_n))\to T_{i-1}(L(C_n)s_nt_n)\to  \\
\ldots\to T_0(L(C_n)s_nt_n)\to T_0(I(C_{n-1})t_n)\dirsum  T_0(I'(C_{n-1})s_n)\to  T_0(I(C_n))\to 0.
\end{eqnarray*}

Since
\[
\dim_K T_0(I(C_{n-1})t_n)=\dim_K T_0(I(C_{n-1})s_n)=2^{n-2}
\text{ and  }
\dim_K T_0(I(C_{n})s_n)=2^{n-1},
\]
it follows that the surjective map
\[
T_0(I(C_{n-1})t_n)\dirsum  T_0(I'(C_{n-1})s_n)\to  T_0(I(C_n))
\]
is an isomorphism. Thus,
\[
T_0(L(C_n)s_nt_n)\to T_0(I(C_{n-1})t_n))\dirsum  T_0(I'(C_{n-1})s_n)
\]
is the zero map.

We claim that for $i\geq 1$ we also have that the map
$\varphi_i:T_i(L(C_n)s_nt_n)\to T_i(I(C_{n-1})t_n))\dirsum  T_i(I'(C_{n-1})s_n)$ given by the long exact sequence,  is the zero map. Indeed,  by Lemma~\ref{intersection} and Theorem~\ref{linearquotients} we know that $T_i(L(C_n)s_nt_n)$ is generated in degree $(n+2)+i$. By Proposition~\ref{smallercycle} the graded Betti numbers $I(C_{n-1})$ and $I'(C_{n-1})$ are the same which together with \cite[Corollary 3.8]{O} implies that $T_i(I(C_{n-1})t_n))\dirsum  T_i(I'(C_{n-1})s_n)$ is generated in degree $(n+1)+i$. This shows that $\varphi_i=0$.
As a consequence of these considerations we see that the above long exact sequence splits into the short exact sequences
\[
0\to T_i(I(C_{n-1})t_n))\dirsum  T_i(I'(C_{n-1})s_n) \to T_i(I(C_n))\to T_{i-1}(L(C_n)s_nt_n)\to 0
\]
for $i\geq 0$. This yields the desired conclusion.
\end{proof}

By using the recursive formula given in Theorem~\ref{final} we obtain:

\begin{Corollary}
\label{closed}
For all $n\geq 4$ and all $i\geq 0$ we have:
\[
\beta_i^n=2^{n-3}\beta_{i}^3+\sum_{j=3}^n2^{n-j}\lambda_{i-1}^j.
\]
Here, $\beta_0^3=4$, $\beta_1^3=6$, $\beta_2^3= 4$, and $\beta_i^3=0$ for $i\geq 3$.
\end{Corollary}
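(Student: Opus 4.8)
The plan is to prove the formula by a routine induction on $n$ that simply unwinds the recursion established in Theorem~\ref{final}, so I would set it up as follows.

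The single substantive input is the identity $\beta_i^n = 2\beta_i^{n-1} + \lambda_{i-1}^n$ of Theorem~\ref{final}, valid for every $n \ge 4$ and every $i \ge 0$, together with the base values $\beta_i^3$ recorded in the statement, which are read off from the minimal free resolution of the four-generated ideal $I(C_3)$ (e.g.\ with CoCoA~\cite{Co}, exactly as in the base step of Corollary~\ref{generators}). I would fix $i$ and induct on $n \ge 4$. The case $n = 4$ is immediate from Theorem~\ref{final}. For the inductive step I assume the closed formula for $n - 1$ and substitute it into $\beta_i^n = 2\beta_i^{n-1} + \lambda_{i-1}^n$: multiplying through by $2$ turns the coefficient $2^{(n-1)-3}$ of $\beta_i^3$ into $2^{n-3}$ and every weight $2^{(n-1)-j}$ of $\lambda_{i-1}^j$ into $2^{n-j}$, while the additional summand $\lambda_{i-1}^n$ (which enters with coefficient $2^0$) extends the sum to include $j = n$. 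No cancellation occurs, and the result is precisely the claimed expression for $n$.

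Two remarks would round off the argument. First, the formula also makes sense at $i = 0$: using $\lambda_{-1}^j = \beta_{-1}(L(C_j)) = 0$, it degenerates to $\beta_0^n = 2^{n-3}\beta_0^3 = 2^{n-1}$, recovering the generator count of Lemma~\ref{Lemma-generators}. Second, since each $L(C_j)$ has a linear resolution by Theorem~\ref{linearquotients} and its Betti numbers $\lambda_i^j$ are given explicitly in Corollary~\ref{bettinoteasy}, the identity turns into a genuinely closed formula for all the $\beta_i^n$, and in particular determines $\projdim I(C_n)$. There is no real obstacle here --- all of the mathematics sits in Theorem~\ref{final} --- and the only points requiring care are anchoring the induction correctly (Theorem~\ref{final} is available only for $n \ge 4$, so $C_3$ may enter solely through the tabulated numbers $\beta_i^3$, never through the recursion) and bookkeeping the geometric weights $2^{n-j}$ so that the index range of the sum comes out as stated.
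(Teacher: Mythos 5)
Your strategy --- unwinding the recursion of Theorem~\ref{final} by induction on $n$ --- is exactly what the paper intends (its entire ``proof'' is the sentence preceding the Corollary), but your claim that the bookkeeping ``comes out as stated'' is precisely where the argument fails. Unwinding $\beta_i^n=2\beta_i^{n-1}+\lambda_{i-1}^n$ from $n$ down to the anchor $n=4$ yields
\[
\beta_i^n=2^{n-3}\beta_i^3+\sum_{j=4}^{n}2^{n-j}\lambda_{i-1}^{j},
\]
with the sum starting at $j=4$. Already at $n=4$ Theorem~\ref{final} gives $\beta_i^4=2\beta_i^3+\lambda_{i-1}^4$, whereas the stated formula reads $2\beta_i^3+2\lambda_{i-1}^3+\lambda_{i-1}^4$; so your assertion that the base case ``is immediate from Theorem~\ref{final}'' holds only if $\lambda_{i-1}^3=0$ for all $i$, and that is not available: taking the description of $L(C_n)$ in Lemma~\ref{intersection} and Corollary~\ref{bettinoteasy} at face value for $n=3$ gives $\lambda_0^3=\beta_0(L(C_3))=(3-1)2^{3-2}=4\neq 0$, so for $i\geq 1$ the extra summand $2^{n-3}\lambda_{i-1}^3$ is a genuine discrepancy rather than a vacuous term. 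A correct write-up must either start the sum at $j=4$ or explicitly justify reading $\lambda^3$ as $0$ (say, because $L(C_3)=I(C_2)\cap I'(C_2)$ is not defined for a simple graph $C_2$); instead you declare that ``no cancellation occurs, and the result is precisely the claimed expression,'' which is false as written.

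A second, independent problem is the base datum $\beta_2^3=4$, which you endorse as ``read off from the minimal free resolution of $I(C_3)$.'' The correct value is $3$: the paper's own Examples record $P_{I(C_3)}=3x^2y^6+6xy^5+4y^3$, and since $I(C_3)$ is an ideal of rank one with projective dimension $2$, the alternating-sum identity $\beta_0-\beta_1+\beta_2=1$ forces $4-6+\beta_2=1$. Both defects are plausibly typos in the Corollary itself, but in a statement whose entire content is arithmetic bookkeeping, the proof cannot simply assert that the bookkeeping matches the statement when it does not.
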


\section{Freiman monomial cut ideals}
\label{Section-Freiman}

In this section we classify all graphs whose monomial cut ideals have the property that the number the  of generators for its powers are as small as possible.

Based on a famous theorem of Freiman \cite{F} it was shown in \cite{HHZ} that if $I$ is an equigenerated monomial ideal, then
\[
\mu(I^2)\geq \ell(I)\mu(I)-{\binom{\ell(I)}{2}}.
\]
Here $\mu(J)$ denotes the minimal number of generators of a monomial ideal $J$ and $\ell(J)$ denotes its analytic spread which by definition is the Krull dimension   of the fiber ring $F(J)=\Dirsum_{i\geq 0}J^k/\mm J^k$, where $\mm$ denotes the graded maximal ideal of $S$. The monomial ideal $I$ is called \emph{Freiman}, if equality holds for the above inequality.

In our particular case of a monomial cut ideal $I$, the fiber ring $F(I)$  is just the \emph{cut algebra} of $I$ which was introduced by Sturmfels and Sullivant \cite{STS} and has been further studied, in particular, by R\"omer--Saeedi Madani \cite{MR1, MR2} and Koley--R\"omer \cite{KR}.

As a generalization of Freiman's theorem,  a result was proved by B\"{o}r\"{o}czky et. al. \cite{BSS}  which in our algebraic terms say that for any equigenerated monomial ideal $I$ one has
\[
\mu(I^{k})\geq {\binom{\ell(I)+k-2}{k-1}}\mu(I)-(k-1){\binom{\ell(I)+k-2}{k}}
\]
for all $k\geq 1$,
and that equality holds if and only if $I$ is a Freiman ideal.
For our classification of Freiman cut ideal we need the following result, which is included in \cite[Theorem 2.3]{HHZ}:

\begin{Theorem}
\label{freimanchar}
The following conditions are equivalent:
\begin{enumerate}
\item[(a)] $I$ is a Freiman ideal;
\item[(b)] $F(I)$ has minimal multiplicity;
\item[(c)] $F(I)$ is Cohen--Macaulay and the defining ideal  of  $F(I)$  has a $2$-linear free resolution.
\end{enumerate}
\end{Theorem}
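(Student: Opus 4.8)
The plan is to translate the Freiman condition into the Hilbert function, codimension and degree of the fiber ring $F(I)$, and then feed everything into the classical theory of varieties of minimal degree.

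First I would record the structure of $F(I)$. Since $I$ is a monomial ideal generated in a single degree, $F(I)$ is isomorphic to the toric subalgebra of $S$ generated by the minimal monomial generators $G(I)$ of $I$; thus $F(I)$ is a standard graded \emph{domain}, its defining ideal $J\subseteq K[y_1,\dots,y_\mu]$ (with $\mu=\mu(I)$) is a homogeneous \emph{prime} ideal contained in $(y_1,\dots,y_\mu)^2$, its embedding dimension is $\mu$, its Krull dimension is $\ell:=\ell(I)$, and hence $c:=\height J=\mu-\ell$. Because the degree-$k$ component of $F(I)$ is $I^k/\mm I^k$, we get $\mu(I^2)=\dim_K F(I)_2=\binom{\mu+1}{2}-\dim_K J_2$, and a one-line computation rewrites the Freiman equality $\mu(I^2)=\ell\mu-\binom{\ell}{2}$ as the single clean condition
\[
\dim_K J_2=\binom{c+1}{2},\qquad\text{equivalently}\qquad H_{F(I)}(2)=\ell\mu-\tbinom{\ell}{2}.
\]

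Next I would pass to geometry. Set $X=\operatorname{Proj} F(I)\subseteq\PP^{\mu-1}$. Since $J$ is prime and minimally generated in degrees $\ge 2$, $X$ is an irreducible, non-degenerate projective variety with $\codim X=c$ and $\deg X=e(F(I))$, and (as $J$ is prime, hence saturated) $F(I)$ is exactly the homogeneous coordinate ring of $X$. By Abhyankar's inequality $e(F(I))\ge c+1$, so $F(I)$ has minimal multiplicity precisely when $\deg X=c+1$, i.e.\ when $X$ is a variety of minimal degree; by the del Pezzo--Bertini classification (together with the Eagon--Northcott description of the resolutions of Veronese and scroll ideals) this holds if and only if $F(I)$ is Cohen--Macaulay and $J$ has a $2$-linear resolution, which is (c). This settles (b)$\Leftrightarrow$(c). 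To connect with (a): on one side, a variety of minimal degree has a Cohen--Macaulay coordinate ring with $h$-vector $(1,c)$, and reading off the Hilbert function in degree $2$ yields $\dim_K J_2=\binom{c+1}{2}$, i.e.\ (a); on the other side, the inequality underlying Freiman's theorem, in the algebraic form of \cite{HHZ} (see also \cite{BSS} for higher powers), says exactly that $H_{F(I)}(2)\ge\ell\mu-\binom{\ell}{2}$ for every standard graded domain generated in degree $1$, with equality forcing $X$ to be of minimal degree. Stringing the three implications together closes the circle (a)$\Leftrightarrow$(b)$\Leftrightarrow$(c).

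The main obstacle is the equality case of this Hilbert-function inequality: one needs not merely $\dim_K J_2\le\binom{c+1}{2}$ for an irreducible non-degenerate $X$, but that equality is attained \emph{only} by varieties of minimal degree and that these are arithmetically Cohen--Macaulay with a $2$-linear resolution. This is precisely Freiman's theorem on sumsets transported into commutative algebra, and it is the step that genuinely rests on the external input \cite{F,HHZ,BSS} rather than on a direct computation; the rest is bookkeeping with Hilbert functions. A more self-contained alternative would reduce $F(I)$ modulo a generic linear system of parameters over an infinite extension of $K$ and argue with the resulting $h$-vector, but that route still requires proving that Freiman equality forces $\depth F(I)=\ell$, which is essentially the same difficulty in disguise.
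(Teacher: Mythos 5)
The paper does not actually prove this statement: Theorem \ref{freimanchar} is imported verbatim from \cite[Theorem 2.3]{HHZ}, so there is no internal argument to compare against, and your sketch should be judged as a reconstruction of the proof in that source. As such it is faithful and essentially correct: for an equigenerated monomial ideal the fiber cone $F(I)$ is the toric algebra $K[G(I)]$, hence a domain; $\mu(I^2)=\dim_K F(I)_2$, and your arithmetic turning the Freiman equality into $\dim_K J_2=\binom{c+1}{2}$ with $c=\mu(I)-\ell(I)$ checks out; the equivalence of (b) and (c) via varieties of minimal degree, del Pezzo--Bertini, and the Eagon--Northcott/$h$-vector $(1,c)$ computation is exactly the route taken in \cite{HHZ}, and your verification that (c) gives $H_{F(I)}(2)=\ell\mu-\binom{\ell}{2}$ is correct. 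Two caveats. First, the inequality $H_{R}(2)\geq \ell\dim_K R_1-\binom{\ell}{2}$ is obtained in \cite{HHZ} from Freiman's sumset theorem applied to the exponent vectors of $G(I)$, i.e.\ it is a statement about affine semigroup rings; your blanket assertion that it holds ``for every standard graded domain generated in degree~$1$'' is stronger than what the cited input delivers (only the toric case is needed, so this does not damage the argument, but it should not be stated as the content of Freiman's theorem). Second, the genuinely hard implication (a)$\Rightarrow$(b) --- that equality in the degree-$2$ Hilbert function bound forces $e(F(I))=c+1$, and not merely a small second graded piece --- is asserted and deferred to \cite{F,HHZ,BSS} rather than proved; you correctly flag this as the crux, and since the paper itself defers the entire theorem to \cite{HHZ} this is an acceptable level of detail here, but be aware that this equality-case analysis (done in \cite{HHZ} by a generic hyperplane section argument) is the only non-bookkeeping step of the whole proof.
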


\begin{Theorem}
\label{classification}
The monomial cut ideal $I(G)$ of a (non-trivial) finite simple graph $G$  is a Freiman ideal if and only if $G$ is one of the following graphs
\[
 K_2,\; K_3,\; P_3,\; K_2\sqcup K_2,\;  K_2\sqcup K_3,\;   K_2\#_{K_1} K_3.
\]
\end{Theorem}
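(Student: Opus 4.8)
The plan is to reformulate Freimanness of $I(G)$ as a volume condition on the cut polytope of $G$ and then classify. By Theorem~\ref{freimanchar}, $I(G)$ is Freiman iff $F(I(G))$ is Cohen--Macaulay with a $2$-linear defining ideal, and since the embedding $\operatorname{Proj} F(I(G))$ is irreducible and non-degenerate (the $u_A$ are pairwise distinct monomials, hence linearly independent), this is equivalent to the ring being of minimal degree, i.e.\ $e(F(I(G)))=\codim F(I(G))+1$; recall the inequality $e\geq\codim+1$ always holds here. Now $F(I(G))$ is the affine semigroup ring generated by the exponent vectors of the $u_A$, so its embedding dimension is $\mu(I(G))=2^{|V(G)|-c}$ ($c=$ number of connected components, by Lemma~\ref{Lemma-generators}), its Krull dimension is $\ell(I(G))=\dim\operatorname{CUT}(G)+1=|E(G)|+1$ (the cut polytope $\operatorname{CUT}(G)$ is full-dimensional, with vertex set exactly the cut vectors), and its multiplicity $e(F(I(G)))$ equals the normalized volume of $\operatorname{CUT}(G)$ with respect to the lattice it spans. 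Hence
\[
I(G)\ \text{is Freiman}\iff \operatorname{nvol}(\operatorname{CUT}(G))=2^{|V(G)|-c}-|E(G)|,
\]
and since $\operatorname{nvol}(P)\geq(\#\,\text{vertices of }P)-\dim P$ for every lattice polytope, the right-hand side says exactly that $\operatorname{CUT}(G)$ has the smallest possible normalized volume.

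Next I would reduce to the blocks of $G$. Ordering the blocks $B_1,\dots,B_r$ (bridges and maximal $2$-connected subgraphs) so that Corollary~\ref{many} applies, we get $I(G)=I(B_1)\cdots I(B_r)$ in pairwise disjoint sets of variables, and correspondingly $\operatorname{CUT}(G)=\operatorname{CUT}(B_1)\times\cdots\times\operatorname{CUT}(B_r)$ and
\[
e(F(I(G)))=\binom{|E(G)|}{|E(B_1)|,\dots,|E(B_r)|}\prod_{i=1}^{r}e(F(I(B_i))).
\]
Two elementary block facts feed the classification: (1) $\mu(I(B_i))=2^{|V(B_i)|-1}>|E(B_i)|+1\geq\dim F(I(B_i))$ as soon as $|V(B_i)|\geq 4$, so $F(I(B_i))$ is a polynomial ring only when $B_i\in\{K_2,K_3\}$; and (2) for those two graphs the cut monomials are algebraically independent, so $F(I(K_2))=K[s_1,t_1]$ and $F(I(K_3))$ is a polynomial ring in four variables, while $F(I(K_2))\ast F(I(K_2))$ is a quadric hypersurface ring and $F(I(K_2))\ast F(I(K_3))$ is the homogeneous coordinate ring of a Segre variety $\PP^1\times\PP^3$; all of these are of minimal degree.

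Granting the key fact stated below, the classification follows. The "if" direction is the observation that the six listed graphs have block multiset one of $\{K_2\}$, $\{K_3\}$, $\{K_2,K_2\}$, $\{K_2,K_3\}$, for which $\operatorname{CUT}(G)$ has minimal volume by the identities above. For "only if", suppose $I(G)$ is Freiman. Then $r\leq 2$: a Segre product of three or more non-trivial cut algebras is never of minimal degree --- already $F(I(K_2))^{\ast 3}$ has $e=6>5=\codim+1$, and the multinomial factor only makes $e$ larger. If $r=1$ then $G$ is a single block, which by the key fact must be $K_2$ or $K_3$. If $r=2$, then comparing the displayed formula for $e$ with $\codim+1$ forces one factor $F(I(B_i))$ to be $K[s,t]$ and the other to be a polynomial ring, whence each $B_i\in\{K_2,K_3\}$ by (1)--(2), and they are not both $K_3$. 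In every case the block multiset is one of the four above, and the graphs realizing a given two-element multiset are exactly the one obtained by gluing the blocks at a single vertex and the one obtained by disjoint union; this yields precisely $K_2$, $K_3$, $P_3$, $K_2\sqcup K_2$, $K_2\sqcup K_3$, $K_2\#_{K_1}K_3$.

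The crux, and the step I expect to be the main obstacle, is the \emph{key fact}: for every $2$-connected graph $B$ with $|V(B)|\geq 4$ the ideal $I(B)$ is not Freiman, i.e.\ $\operatorname{nvol}(\operatorname{CUT}(B))>2^{|V(B)|-1}-|E(B)|$. The count in (1) already shows $\operatorname{CUT}(B)$ is not a simplex, so it suffices to rule out $\operatorname{CUT}(B)$ having Ehrhart degree at most $1$. I would invoke the classification of lattice polytopes of degree $\leq 1$ (Lawrence/Cayley prisms) and argue that the structure of a cut polytope --- in particular that its proper faces are again cut polytopes, of edge-contractions of $B$, together with its large symmetry group --- is incompatible with being such a prism; via an ear decomposition this reduces to the case of a cycle $C_k$ with $k\geq 4$, where $\operatorname{CUT}(C_k)$ is a demihypercube and one computes $e(F(I(C_k)))$ and compares it with $2^{k-1}-k$ using $\mu(I(C_k))=2^{k-1}$, $\dim F(I(C_k))=k+1$ and the generator/Betti data of Section~\ref{Section-total-Betti-numbers}. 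A few small $2$-connected graphs that cannot be reached this way --- notably $K_4$, whose cut algebra is a hypersurface cut out by a quartic rather than a quadric --- are dispatched directly, e.g.\ with CoCoA.
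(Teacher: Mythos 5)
Your overall strategy is genuinely different from the paper's: you recast Freimanness as the cut variety being of minimal degree, decompose $G$ into blocks, and use the multiplicity formula for Segre products, whereas the paper simply combines Theorem~\ref{freimanchar} with the already-existing classification in \cite[Theorem 6.10]{MR1} of graphs whose cut algebra has a linear resolution (plus \cite[Proposition 3.1]{MR1} and a CoCoA check for $P_3$ and $K_2\sqcup K_3$). Your ``if'' direction is essentially sound: the Segre products $\PP^1\times\PP^1$ and $\PP^1\times\PP^3$ are of minimal degree, and $F(I(K_2))$, $F(I(K_3))$ are polynomial rings, so the six listed graphs do give Freiman ideals.

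The ``only if'' direction, however, has a genuine gap, and it is exactly the step you flag yourself: the \emph{key fact} that no $2$-connected graph $B$ with $|V(B)|\ge 4$ has Freiman cut ideal is never proved. The sketch you offer does not close it. First, the assertion that the proper faces of $\operatorname{CUT}^{\square}(B)$ are cut polytopes of edge-contractions is false in general (already the triangle facets of $\operatorname{CUT}^{\square}(K_3)$ are simplices on three of the four cut vectors, not cut polytopes of minors), so the reduction via the structure theory of degree-$\le 1$ lattice polytopes does not get off the ground as stated. Second, the ear-decomposition step needs a monotonicity principle --- some statement that non-minimality of degree for $\operatorname{CUT}(C_k)$, $k\ge 4$, propagates to any $2$-connected graph containing $C_k$ --- and you supply none; the tool that would do this is the algebra-retract machinery of \cite{MR1} (minors of $G$ give algebra retracts of $F(I(G))$, and $2$-linearity of the defining ideal descends to retracts), which is precisely what \cite[Theorem 6.10]{MR1} is built on, but you do not invoke it. Third, even granting the key fact, your $r=2$ case is incomplete: you only treat the situation where both factors are polynomial rings, but you must also exclude block multisets such as $\{K_2,B\}$ with $B$ a large $2$-connected block, and the inequality ``$e$ of the Segre product exceeds $\codim+1$'' in that case requires a lower bound on $e(F(I(B)))$ that is not simply the key fact (non-Freimanness of $I(B)$ does not formally yield non-Freimanness of $I(K_2\sqcup B)$ without an extra argument, e.g.\ again via retracts). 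The cleanest repair is to replace the entire ``only if'' analysis by the citation the paper uses: Theorem~\ref{freimanchar}(c) reduces the problem to classifying cut algebras that are Cohen--Macaulay with $2$-linear defining ideal, and \cite[Theorem 6.10]{MR1} lists the graphs with linear resolution, from which only $K_4$ (whose defining ideal is a quartic, hence $4$-linear) has to be discarded.
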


\begin{proof}
Let $G=K_2$ or $G=K_3$. Then by \cite[Proposition 3.1]{MR1}, $F(I(G))$ is a polynomial ring, so that $I(G)$
 is Freiman by Theorem~\ref{freimanchar}.

Note that $I(K_2\sqcup K_2)=I(P_3)$ and $I(K_2\sqcup K_3)=I(K_2\#_{K_1} K_3)$,  see Proposition~\ref{near}. Thus, it suffices to show that $I(P_3)$ and $I(K_2\sqcup K_3)$ are Freiman. This can easily be checked by CoCoA \cite{Co}, by using the fact that the analytic spread
of $I(G)$  is equal to $E(G)|+1$, as shown in \cite[Formula (2)]{MR1}.

Conversely, if $I(G)$ is Freiman,  then  Theorem~\ref{freimanchar} together with \cite[Proposition 3.1]{MR1} and \cite[Theorem 6.10]{MR1}
imply that $G$  must be one of the graphs in the list. Here we use again that $I(K_2\sqcup K_2)=I(P_3)$, $I(K_2\sqcup K_3)=I(K_2\#_{K_1} K_3)$, and that $I(K_4)$, which appears in the statement of \cite[Theorem 6.10]{MR1}, has $4$-linear resolution.
\end{proof}

\end{document}